\newcommand{\sparse}[1]{sp\tsup{#1}}
\numberwithin{equation}{section}
\newtheorem{prop}{Proposition}[section]
\newtheorem{theo}[prop]{Theorem}
\theoremstyle{definition}
\newtheorem{ex}[prop]{Example}
\newtheorem{definition}{Definition}
\newcommand{\rrVert}{\Vert}
\newcommand{\llVert}{\Vert}
\newcommand{\rrvert}{\vert}
\newcommand{\llvert}{\vert}
\def\tsup#1{\textsuperscript{#1}}
\begin{document}

\begin{frontmatter}
\pretitle{Research Article}

\title{The asymptotic error of chaos expansion approximations for
stochastic differential equations}

\author[a]{\inits{T.}\fnms{Tony}~\snm{Huschto}\ead[label=e1]{tony.huschto@roche.com}}
\author[b]{\inits{M.}\fnms{Mark}~\snm{Podolskij}\thanksref{cor1}\ead[label=e2]{mpodolskij@math.au.dk}}
\thankstext[type=corresp,id=cor1]{Corresponding author.}
\author[c]{\inits{S.}\fnms{Sebastian}~\snm{Sager}\ead[label=e3]{sager@ovgu.de}}

\address[a]{Department of Mathematics,
\institution{Heidelberg University},
Im Neuenheimer Feld 205, 69120 Heidelberg,
\cny{Germany}}
\address[b]{Department of Mathematics,
\institution{Aarhus University},
Ny Munkegade 118,\break 8000 Aarhus,
\cny{Denmark}}
\address[c]{Faculty of Mathematics,
\institution{Otto-von-Guericke Universit\"{a}t Magdeburg},
Universit\"{a}tsplatz 2, 39106 Magdeburg,
\cny{Germany}}



\markboth{T. Huschto et al.}{The asymptotic error of chaos expansion
approximations for stochastic differential equations}

\begin{abstract}
In this paper we present a numerical scheme for stochastic differential
equations based upon the Wiener chaos expansion. The approximation of
a square integrable stochastic differential equation is obtained by
cutting off the infinite chaos expansion in chaos order and in number
of basis elements. We derive an explicit upper bound for the
$L^{2}$ approximation error associated with our method. The proofs are
based upon an application of Malliavin calculus.
\end{abstract}
\begin{keywords}
\kwd{Chaos expansion}
\kwd{Malliavin calculus}
\kwd{numerical approximation}
\kwd{stochastic differential equations}
\end{keywords}
\begin{keywords}[MSC2010]%
\kwd{65C30}
\kwd{60H10}
\kwd{60H07}
\end{keywords}

\received{\sday{22} \smonth{11} \syear{2018}}
\revised{\sday{9} \smonth{3} \syear{2019}}
\accepted{\sday{9} \smonth{3} \syear{2019}}
\publishedonline{\sday{23} \smonth{4} \syear{2019}}
\end{frontmatter}

\section{Introduction}%
\label{sec1}

We consider a one-dimensional continuous stochastic process
$(X_{t})_{t \in [0,T]}$ that satisfies the stochastic differential
equation
%
\begin{align}
\label{sde} dX_{t}=b(t,X_{t}) dt+ \sigma (t,
X_{t}) dW_{t} \quad \text{with} \
X_{0}=x_{0},
\end{align}
where $(W_{t})_{t \in [0,T]}$ is a Brownian motion 
defined on
a filtered probability space $(\varOmega , \mathcal{F},\allowbreak (
\mathcal{F}_{t})_{t \in [0,T]}, \mathbh{P}   )$. Various numerical
approximation schemes for the SDE
\eqref{sde} have been proposed and
studied in the literature in the past decades. The probably most
prominent numerical approximation for the solution of \eqref{sde} is the
Euler scheme, which can be described as follows. Let $\varphi _{n}:
\mathbh{R}_{+} \to \mathbh{R}_{+}$ be the function defined by
$\varphi _{n}(t)=i/n$ when $t \in [i/n, (i+1)/n)$. The continuous Euler
approximation scheme\index{Euler approximation scheme} is described by
%
\begin{align}
\label{Eulersde} dX_{t}^{n}=b \bigl(\varphi
_{n} (t), X_{\varphi _{n} (t)}^{n} \bigr) dt+ \sigma \bigl(
\varphi _{n} (t), X_{\varphi _{n} (t)}^{n}
\bigr)dW_{t} \quad \text{with} \ X_{0}^{n}=x_{0}.
\end{align}
The probabilistic properties\index{probabilistic properties} of the Euler approximation scheme\index{Euler approximation scheme} have been
investigated in numerous papers. We refer to the classical works
\cite{BT1,BT2,JP,KP,KuPr,MP} for the studies on weak and strong
approximation errors among many others. Asymptotic results in the
framework of non-regular coefficients can be found in e.g.
\cite{A,CS,HK,Yan}.

In this paper we take a different route and propose to use the Wiener chaos
expansion\index{Wiener chaos expansion} (also called polynomial chaos in the literature) to
approximate the solution of the SDE 
\eqref{sde}. To explain ideas let
us fix an orthonormal basis\index{orthonormal basis} $(e_{i})_{i\geq 1}$ of the separable Hilbert
space $L^{2}([0,T])$. It is a well-known statement (cf. \cite{CM47})
that if $X_{t} \in L^{2}(\varOmega , \mathcal{F}, \mathbh{P})$ for all
$t \in [0,T]$, where $\mathcal{F}$ is generated by the Brownian motion
$(W_{t})_{t \in [0,T]}$, it admits the chaotic expansion
%
\begin{align}
\label{choasx} X_{t} = \sum_{\alpha \in \mathcal{I}}
x_{\alpha } (t) \varPsi ^{\alpha },
\end{align}
where $x_{\alpha }$ are deterministic functions, $\varPsi ^{\alpha }$ are
mutually orthogonal random projections that are associated to the basis
$(e_{i})_{i\geq 1}$, and the index set $\mathcal{I}$ is defined via
%
\begin{align}
\label{setI} \mathcal{I}:= \bigl\{\alpha =(\alpha _{i})_{i \geq 1}:~
\alpha _{i} \in \mathbh{N}_{0} \text{ and almost all $
\alpha _{i}$'s are } 0 \bigr\}.
\end{align}
Such orthogonal expansions have been successfully applied in numerous
fields of stochastic and numerical analysis. We refer e.g. to
\cite{HS13,HS14,IS83,VK76} for applications of the Wiener chaos expansion\index{Wiener chaos expansion}
in the context of SDEs and to \cite{LR06,LMR97,L06,WRK09} for
applications of polynomial expansion in modelling, simulation and
filtering of stochastic partial differential equations. The aim of this
work is to use the chaos expansion\index{chaos expansion}
\eqref{choasx} to numerically
approximate the solution of the SDE 
\eqref{sde}. For this purpose we
need to truncate the infinite sum in \eqref{choasx}. An obvious approach
is to consider the approximation
%
\begin{align}
\label{xpk} X_{t}^{p,k} = \sum
_{\alpha \in \mathcal{I}_{p,k}} x_{\alpha } (t) \varPsi ^{\alpha },
\end{align}
where the subset $\mathcal{I}_{p,k} \subset \mathcal{I}$ refers to using
orthogonal projections $\varPsi ^{\alpha }$ only with respect to the first
$k$ basis elements $(e_{i})_{1\leq i \leq k}$ and only up to the $p$th
order Wiener chaos.\index{Wiener chaos} This method is mostly related to the articles
\cite{HS13,HS14,L06,LMR97}. More specifically, the works
\cite{LMR97,L06} study the $L^{2}$-error associated with the
approximation 
\eqref{xpk}, but only for a particular choice of the
basis $(e_{i})_{i \geq 1}$. In this paper we will study the decay rate
of $\mathbh{E}[(X_{t}-X_{t}^{p,k})^{2}]$ when $k,p \to \infty $ for a
general basis $(e_{i})_{i \geq 1}$ of $L^{2}([0,T])$ applying methods
from Malliavin calculus.\index{Malliavin calculus}

The paper is structured as follows. In Section \ref{sec2} we present the
elements of Malliavin calculus.\index{Malliavin calculus} The main results of the paper are
demonstrated in Section \ref{sec3}. Section \ref{sec4} is devoted to
proofs. In Section~\ref{sec5} we illustrate our approach with exemplary
numerical results for the Haar and a trigonometric basis,\index{trigonometric basis} and propose
a heuristic based on sparse indices for computational speedup.

\section{Background on Malliavin calculus\index{Malliavin calculus}}%
\label{sec2}

In this section we introduce some basic concepts of Malliavin calculus.\index{Malliavin calculus}
The interested readers are referred to \cite{N} for more thorough
exposition on this subject. Set $\mathbh{H} = L^{2}([0,T])$ and let
$\langle \cdot , \cdot \rangle _{\mathbh{H}}$ denote the scalar product
on $\mathbh{H}$. We note that $\mathbh{H}$ is a separable Hilbert space
and denote by $(e_{i})_{i \geq 1}$ an orthonormal basis\index{orthonormal basis} of $
\mathbh{H}$. We consider the \textit{isonormal Gaussian family}
$W =\{ W(h) : h \in \mathbh{H} \}$ indexed by $\mathbh{H}$ defined on
a probability space $(\varOmega , \mathcal{F}, \mathbh{P})$, i.e. the
random variables $W(h)$ are centered Gaussian with a covariance
structure determined via
\begin{equation*}
\mathbh{E}\bigl[ W(g) W(h) \bigr] = \langle g,h \rangle _{\mathbh{H}}.
\end{equation*}
Here and throughout the paper we assume that $\mathcal{F}= \sigma (W)$.
In our setting we consider $W(h) = \int_{0}^{T} h_{s} dW_{s}$, where
$W$ is a standard Brownian motion. We define the normalised Hermite
polynomials through the identities
%
\begin{align}
\label{hermite} H_{0}(x):=1, \qquad H_{n}(x):=
\frac{(-1)^{n}}{\sqrt{n!}} \exp \biggl( \frac{x^{2}}{2} \biggr) \frac{\mathrm{d}^{n}}{\mathrm{d} x^{n}}
\biggl( \exp \biggl( -\frac{x^{2}}{2} \biggr) \xch{\biggr),}{\biggr)} \quad n \geq 1.
\end{align}
The $n$\textit{th Wiener chaos\index{Wiener chaos}} $\mathcal{H}_{n}$ is the closed linear
subspace of $L^{2}(\varOmega , \mathcal{F}, \mathbh{P})$ generated by the
family of random variables $\{H_{n}(W(h)):~\|h\|_{\mathbh{H}}=1 \}$. The
vector spaces $\mathcal{H}_{n}$, $n\geq 0$, are orthogonal and we have
the Wiener chaos expansion\index{Wiener chaos expansion}
%
\begin{align}
\label{Wienerchaos} L^{2}(\varOmega , \mathcal{F}, \mathbh{P})=
\bigoplus_{n=0}^{\infty } \mathcal{H}_{n}.
\end{align}
(See \cite[Theorem 1.1.1]{N} for more details\xch{.)}{).} Next, for $\alpha
\in \mathcal{I}$, where $\mathcal{I}$ has been introduced in
\eqref{setI}, we define the random variable
%
\begin{align}
\label{psialpha} \varPsi ^{\alpha } := \prod_{i=1}^{\infty }H_{\alpha _{i}}
\bigl(W(e_{i})\bigr),
\end{align}
where $(e_{i})_{i \geq 1}$ is a fixed orthonormal basis\index{orthonormal basis} of $
\mathbh{H}$. We define $|\alpha |= \sum_{i=1}^{\infty } \alpha _{i}$ and
$\alpha !=\prod_{i=1}^{\infty } \alpha _{i}!$ for $\alpha \in
\mathcal{I}$. We deduce that the set $\{ \varPsi ^{\alpha }:~ \alpha
\in \mathcal{I} \text{ with } |\alpha |=n \}$ forms a complete
orthonormal basis\index{orthonormal basis} of the $n$th Wiener chaos\index{Wiener chaos} $\mathcal{H}_{n}$ and
consequently $\{ \varPsi ^{\alpha }:~ \alpha \in \mathcal{I} \}$ is a
complete orthonormal basis\index{orthonormal basis} of $L^{2}(\varOmega , \mathcal{F}, \mathbh{P})$
(cf. \cite[Proposition 1.1.1]{N}).

Now, we introduce multiple stochastic integrals of order $n \in
\mathbh{N}$, which are denoted by $I_{n}$. For an element $h^{\otimes
n}:=h \otimes \cdots \otimes h$ in $\mathbh{H}^{\otimes n}$ with
$\|h\|_{\mathbh{H}}=1$, we define
%
\begin{align}
\label{defI} I_{n}\bigl(h^{\otimes n}\bigr):= \sqrt{n!}
~H_{n} \bigl(W(h) \bigr), \quad n\geq 1.
\end{align}
Assuming that the mapping $I_{n}$ is linear, the definition
\eqref{defI} can be extended to all symmetric elements $h \in
\mathbh{H}^{\otimes n}$ by polarisation identity.\vadjust{\goodbreak} Finally, for an
arbitrary function $h \in \mathbh{H}^{\otimes n}$ we set
\begin{equation*}
I_{n} (h):= I_{n} (\widetilde{h}),
\end{equation*}
where $\widetilde{h}$ denotes the symmetrised version of $h \in
\mathbh{H}^{\otimes n}$. By definition $I_{n}$ maps $\mathbh{H}$ into
$\mathcal{H}_{n}$, so the multiple integrals of different orders are
orthogonal. In particular, they satisfy the isometry property
%
\begin{align}
\label{isometry} \mathbh{E} \bigl[ I_{m}(g) I_{n}(h)
\bigr] = n! \langle \widetilde{g}, \widetilde{h}\rangle _{\mathbh{H}^{\otimes n}}
1_{\{n=m
\}}, \quad h \in \mathbh{H}^{\otimes n}, g \in
\mathbh{H}^{\otimes m}.
\end{align}
Furthermore, for any symmetric $h \in \mathbh{H}^{\otimes n}, g
\in \mathbh{H}^{\otimes m}$, the following multiplication formula holds:
%
\begin{align}
\label{multiplication} I_{m}(g) I_{n}(h) = \sum
_{r=0}^{\min (m,n)} r! {m \choose r}
{n \choose r} I_{m+n-2r} (g \otimes _{r} h ),
\end{align}
where the $r$th \textit{contraction} $g \otimes _{r} h$ is defined by
\begin{align*}
g \otimes _{r} h (t_{1}, \ldots , t_{m+n-2r})
&:= \int_{[0,T]^{r}} g(u _{1},\ldots
,u_{r},t_{1},\ldots t_{m-r})
\\[1.5 ex] &\qquad  \times h(u_{1},\ldots ,u_{r},t_{m-r+1},
\ldots t_{m+n-2r}) du_{1} \ldots du_{r}.
\end{align*}
(See \cite[Proposition 1.1.3]{N}.) The Wiener chaos expansion\index{Wiener chaos expansion}
\eqref{Wienerchaos} transfers to the context of multiple integrals as
follows. For each random variable $F \in L^{2}(\varOmega , \mathcal{F},
\mathbh{P})$ we obtain the orthogonal decomposition
%
\begin{align}
\label{Wienerchaos2} F = \sum_{n=0}^{\infty }
I_{n} (g_{n}), \quad g_{n} \in
\xch{\mathbh{H}^{\otimes n},}{\mathbh{H}^{\otimes n}}
\end{align}
where $I_{0} = \mathbh{E}[F]$ and the decomposition is unique when
$g_{n}$, $n\geq 1$, are assumed to be symmetric (cf.
\cite[Theorem 1.1.2]{N}).

Next, we introduce the notion of Malliavin derivative\index{Malliavin derivative} and its adjoint
operator. We define the set of smooth random variables via
\begin{equation*}
\mathcal{S} = \bigl\{ F = f \bigl( W(h_{1}), \ldots ,
W(h_{n}) \bigr):~ n \geq 1, h_{i} \in \mathbh{H} \bigr
\},
\end{equation*}
where $f \in C_{p}^{ \infty } (\mathbh{R}^{n})$ (i.e. the space of
infinitely
differentiable functions such that all derivatives
exhibit polynomial growth). The $k$\textit{th order Malliavin
derivative\index{Malliavin derivative}} of $F \in \mathcal{S}$, denoted by $D^{k} F$, is defined by
%
\begin{align}
\label{Derivative} D^{k} F = \sum_{i_{1}, \ldots , i_{k} = 1}^{n}
\frac{ \partial ^{k}}{
\partial x_{i_{1}} \cdots \partial x_{i_{k}}} f \bigl(W(h_{1}), \ldots , W(h_{n})
\bigr) h_{i_{1}} \otimes \cdots \otimes h_{i_{k}}.
\end{align}
Notice that $D^{k} F$ is a $\mathbh{H}^{\otimes k}$-valued random
variable, and we write $D_{x}^{k} F$ for the realisation of the function
$D^{k} F$ at the point $x \in [0,T]^{k}$. The space $\mathbh{D}^{k,q}$
denotes the completion of the set $\mathcal{S}$ with respect to the norm
\begin{equation*}
\llVert F \rrVert _{k, q} := \Biggl( \mathbh{E}\bigl[ \llvert F
\rrvert ^{q} \bigr] + \sum_{m =
1}^{k}
\mathbh{E}\bigl[ \bigl\llVert D^{m} F \bigr\rrVert
_{ \mathbh{H}^{ \otimes m}}^{q}\bigr] \Biggr) ^{1/q}.
\end{equation*}
We define $\mathbh{D}^{k,\infty }= \cap _{q>1} \mathbh{D}^{k,q}$. The
Malliavin derivative\index{Malliavin derivative} of the random variable $\varPsi ^{\alpha } \in
\mathcal{S}$, $\alpha \in \mathcal{I}$, can be easily computed\vadjust{\goodbreak} using the
definition \eqref{Derivative} and the formula $H'_{n}(x)= \sqrt{n}H
_{n-1}(x)$:
%
\begin{align}
\label{derPsi} D_{s} \varPsi ^{\alpha } &= \sum
_{i=1}^{\infty } \sqrt{\alpha _{i}} e
_{i}(s) \xch{\varPsi ^{\alpha ^{-}(i)},}{\varPsi ^{\alpha ^{-}(i)}} \quad \text{where}
\\
\label{alphaminus} \alpha ^{-}(i) &:= (\alpha _{1},
\ldots , \alpha _{i-1}, \alpha _{i} -1, \alpha
_{i+1}, \ldots ) \quad \text{if } \alpha _{i} \geq 1.
\end{align}
Higher order Malliavin derivatives\index{Malliavin derivative} of $\varPsi ^{\alpha }$ are computed
similarly.

The operator $D^{k}$ possesses an unbounded adjoint denoted by
$\delta ^{k}$, which is often referred to as multiple Skorokhod integral.
The following integration by parts formula holds (see
\cite[Exercise 1.3.7]{N}): if $u \in \text{Dom}( \delta ^{k})$ and
$F \in \mathbh{D}^{k,2}$, where $\text{Dom}( \delta ^{k})$ consists of
all elements $u \in L^{2}(\varOmega ; \mathbh{H}^{ \otimes k})$ such that
the inequality $|\mathbh{E}[ \langle D^{k} F, u
\rangle _{ \mathbh{H}^{ \otimes k}}]| \leq c (E[F^{2}])^{1/2}$ holds for
some $c>0$ and all $F \in \mathbh{D}^{k,2}$, then we have the identity
%
\begin{align}
\label{IBP} \mathbh{E} \bigl[ F \delta ^{k}(u) \bigr] =
\mathbh{E} \bigl[ \bigl\langle D ^{k} F, u \bigr\rangle
_{ \mathbh{H}^{ \otimes k}} \bigr].
\end{align}
In case the random variable $F \,{\in}\, L^{2}(\varOmega , \mathcal{F},
\mathbh{P})$ has a chaos decomposition as \mbox{displayed} in
\eqref{Wienerchaos2}, the statement $F \in \mathbh{D}^{k,2}$ is
equivalent to the condition $\sum_{n=1}^{\infty } n^{k} n! \|g_{n}\|
^{2}_{\mathbh{H}^{\otimes n}} < \infty $. In particular, when
$F \in \mathbh{D}^{1,2}$ we deduce an explicit chaos representation of
the derivative $DF=(D_{t} F)_{t \in [0,T]}$
%
\begin{align}
\label{Dt} D_{t} F = \sum_{n=1}^{\infty }
n I_{n-1} \bigl(g_{n} (\cdot , t) \bigr), \quad t \in
[0,T],
\end{align}
where $g_{n} (\cdot , t): [0,T]^{n-1}\to \mathbh{R}$ is obtained from
the function $g_{n}$ by setting the last argument equal to $t$ (see
\cite[Proposition 1.2.7 ]{N}). Finally, we present an explicit formula
for the Malliavin derivative\index{Malliavin derivative} of a solution 
of a stochastic differential
equation. Assume that $(X_{t})_{t \in [0,T]}$ is a solution of a
stochastic differential equation \eqref{sde} and $b,\sigma \in C^{1}(
\mathbh{R})$. In this setting $DX_{t}=(D_{s}X_{t})_{s \in [0,T]}$ is
given as the solution of the SDE
%
\begin{align}
\label{MalSDE} D_{s}X_{t} = \sigma (s,
X_{s}) + \int_{s}^{t}
\frac{\partial }{\partial
x}b(u,X_{u})D_{s}(X_{u})
du + \int_{s}^{t} \frac{\partial }{\partial
x} \sigma
(u,X_{u}) D_{s}(X_{u})
dW_{u},
\end{align}
for $s \leq t$, and $D_{s}X_{t} = 0$ if $s>t$ (see
\cite[Theorem 2.2.1]{N}). Throughout the paper $\partial /\partial x$
denotes the derivative with respect to the space variable and
$\partial /\partial t$ denotes the derivative with respect to the time
variable.

\section{Main results}%
\label{sec3}
We start with the analysis of the Wiener chaos expansion\index{Wiener chaos expansion} introduced 
in \eqref{choasx}. Under square integrability assumption on the solution
$X$ of the SDE \eqref{sde} we obtain the Wiener chaos expansion\index{Wiener chaos expansion}
\begin{align*}
X_{t} = \sum_{\alpha \in \mathcal{I}} x_{\alpha }
(t) \varPsi ^{\alpha } \quad \text{with} \ \varPsi ^{\alpha }=
\prod_{i=1}^{\infty }H_{\alpha _{i}}
\bigl(W(e_{i})\bigr).
\end{align*}
In order to study the strong approximation error we require a good
control of the coefficients $x_{\alpha } (t)$.\vadjust{\goodbreak} When $X_{t}$ is
sufficiently smooth in the Malliavin sense, we deduce the identity
%
\begin{align}
\label{xformula} x_{\alpha } (t) = \mathbh{E} \bigl[ X_{t}
\varPsi ^{\alpha } \bigr] = \frac{1}{\sqrt{
\alpha !}} \mathbh{E} \Biggl[ \Biggl
\langle D^{|\alpha |} X_{t}, \bigotimes
_{i = 1}^{\infty } e_{i}^{\otimes \alpha _{i}}
\Biggr\rangle _{\mathbh{H}^{\otimes |\alpha |}} \Biggr]
\end{align}
applying the duality formula \eqref{IBP}. In fact, the coefficients
$x_{\alpha } (t)$ satisfy a system of ordinary differential equations.
The following propagator\index{propagator system} system has been derived in
\cite{HS14,LMR97}. We state the proof for completeness.

\begin{theo}
\label{th1}
Let $(X_{t})_{t \in [0,T]}$ be the unique solution of the SDE
\eqref{sde} and assume that $X \in L^{2}(\varOmega \times [0,T])$. Then
$X_{t}$ possesses the chaos expansion\index{chaos expansion} 
\eqref{choasx} and the
coefficients $x_{\alpha } (t)$ satisfy the system of ordinary
differential equations
%
\begin{align}
\label{propagator} x'_{\alpha }(t) &= b_{\alpha }
(t,X_{t}) + \sum_{j=1}^{\infty }
\sqrt{ \alpha _{j}} e_{j}(t) \sigma _{\alpha ^{-}(j)}
(t,X_{t}),
\\
x_{\alpha }(0) &= 1_{\{\alpha =0\}} x_{0}.
\nonumber
\end{align}
Here $b_{\alpha } (t,X_{t})$ (resp. $\sigma _{\alpha } (t,X_{t})$)
denotes the $\alpha $-coefficient of the Wiener chaos expansion
\eqref{choasx} associated with the random variable $b(t, X_{t})$ (resp.
$\sigma (t,X_{t})$), and the multi-index $\alpha ^{-}(j)$ is defined 
by \eqref{alphaminus}.
\end{theo}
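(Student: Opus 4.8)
The plan is to read off the coefficients $x_\alpha(t)$ from the integral form of \eqref{sde} by projecting onto the orthonormal elements $\varPsi^\alpha$. Since $\mathcal{F} = \sigma(W)$, the family $\{\varPsi^\alpha : \alpha \in \mathcal{I}\}$ is a complete orthonormal basis of $L^2(\varOmega, \mathcal{F}, \mathbh{P})$, so the square-integrable random variable $X_t$ automatically admits the expansion \eqref{choasx} with $x_\alpha(t) = \mathbh{E}[X_t \varPsi^\alpha]$; this settles the existence claim. Writing \eqref{sde} in integrated form, multiplying by $\varPsi^\alpha$ and taking expectations, I would split $x_\alpha(t)$ into three contributions coming from the initial value, the drift, and the diffusion.

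The first two are routine. The deterministic term gives $x_0\, \mathbh{E}[\varPsi^\alpha] = x_0\, 1_{\{\alpha = 0\}}$, since $\mathbh{E}[H_n(W(e_i))] = 0$ for every $n \geq 1$; being constant in $t$ it only contributes to the initial condition. For the drift, Fubini's theorem (justified by Cauchy--Schwarz together with $X \in L^2$ and the integrability of $b(\cdot, X_\cdot)$) yields $\mathbh{E}[\varPsi^\alpha \int_0^t b(s, X_s)\, ds] = \int_0^t b_\alpha(s, X_s)\, ds$, where $b_\alpha(s, X_s) := \mathbh{E}[\varPsi^\alpha b(s, X_s)]$ is exactly the $\alpha$-coefficient of $b(s, X_s)$.

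The diffusion term is the heart of the argument. Since $X$ is adapted, the integrand $u := \sigma(\cdot, X_\cdot)\, 1_{[0,t]}$ is an adapted square-integrable process, so it belongs to $\mathrm{Dom}(\delta)$ and its Skorokhod integral coincides with the It\^o integral, $\delta(u) = \int_0^t \sigma(s, X_s)\, dW_s$. As $\varPsi^\alpha \in \mathcal{S} \subset \mathbh{D}^{1,2}$, the duality formula \eqref{IBP} applies and gives $\mathbh{E}[\varPsi^\alpha \delta(u)] = \mathbh{E}[\langle D\varPsi^\alpha, u\rangle_{\mathbh{H}}]$. Plugging in the explicit derivative \eqref{derPsi}, $D_s \varPsi^\alpha = \sum_{j \geq 1} \sqrt{\alpha_j}\, e_j(s)\, \varPsi^{\alpha^-(j)}$ --- a \emph{finite} sum because $\alpha \in \mathcal{I}$ has only finitely many nonzero entries --- and interchanging the (finite) sum with the time integral and the expectation, I obtain $\int_0^t \sum_{j \geq 1} \sqrt{\alpha_j}\, e_j(s)\, \sigma_{\alpha^-(j)}(s, X_s)\, ds$ with $\sigma_{\alpha^-(j)}(s, X_s) := \mathbh{E}[\varPsi^{\alpha^-(j)} \sigma(s, X_s)]$.

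Combining the three pieces shows that $x_\alpha$ solves the integral equation whose integrand is the right-hand side of \eqref{propagator}; differentiating in $t$ produces the ODE and evaluating at $t = 0$ gives $x_\alpha(0) = 1_{\{\alpha = 0\}} x_0$. The step I expect to require the most care is the diffusion term: one must justify that $u$ lies in $\mathrm{Dom}(\delta)$ with $\delta(u)$ equal to the It\^o integral, and ensure $\sigma(s, X_s) \in L^2(\varOmega)$ so that the coefficients $\sigma_{\alpha^-(j)}(s, X_s)$ are well defined. A secondary point is the passage from the integral equation to the differential one, which presupposes continuity of $s \mapsto b_\alpha(s, X_s)$ and $s \mapsto \sigma_{\alpha^-(j)}(s, X_s)$; this follows from the mean-square continuity of $X$.
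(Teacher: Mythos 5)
Your proposal is correct and follows essentially the same route as the paper: project the integrated SDE onto $\varPsi^{\alpha}$, identify the drift contribution as $\int_{0}^{t} b_{\alpha}(s,X_{s})\,ds$, handle the stochastic integral via $\delta(1_{[0,t]}\sigma(\cdot,X_{\cdot}))$ together with the duality formula \eqref{IBP} and the derivative formula \eqref{derPsi}, and then differentiate in $t$. The additional care you flag (Fubini, $u\in\mathrm{Dom}(\delta)$, continuity of the integrands) is left implicit in the paper but does not change the argument.
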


\begin{proof}
Using the SDE \eqref{sde} and applying the formula \eqref{xformula} we
obtain the identity
\begin{align*}
x_{\alpha }(t) = x_{0}1_{\{ \alpha = 0\}} + \int
_{0}^{t} \mathbh{E}\bigl[ \varPsi
^{\alpha } b(s,X_{s})\bigr] ds+ \mathbh{E} \Biggl[\varPsi
^{\alpha } \int_{0}^{t} \sigma
(s,X_{s}) dW_{s} \Biggr].
\end{align*}
Applying the formula \eqref{xformula} once again for the random variable
$b(s,X_{s})$ we immediately deduce that
\begin{equation*}
b_{\alpha } (s,X_{s})= \mathbh{E}\bigl[\varPsi
^{\alpha } b(s,X_{s})\bigr].
\end{equation*}
On the other hand, observing the identity $\delta (1_{[0,t]}\sigma (
\cdot , X_{\cdot })) = \int_{0}^{t} \sigma (s,X_{s}) dW_{s}$, we get by
the duality formula \eqref{IBP} and \eqref{derPsi} that
\begin{align*}
\mathbh{E} \Biggl[\varPsi ^{\alpha } \int_{0}^{t}
\sigma (s,X_{s}) dW_{s} \Biggr] &= \sum
_{j=1}^{\infty } \int_{0}^{t}
\sqrt{\alpha _{j}} e _{j}(s) \mathbh{E} \bigl[\varPsi
^{\alpha ^{-}(j)} \sigma (s,X_{s}) \bigr] ds
\\
&= \sum_{j=1}^{\infty } \int
_{0}^{t} \sqrt{\alpha _{j}}
e_{j}(s) \sigma _{\alpha ^{-}(j)}(s,X_{s}) ds.
\end{align*}
Putting things together we obtain the identity
\begin{equation*}
x_{\alpha }(t) = x_{0}1_{\{ \alpha = 0\}} + \int
_{0}^{t} b_{\alpha } (s,X
_{s}) ds + \sum_{j=1}^{\infty }
\int_{0}^{t} \sqrt{\alpha _{j}} e
_{j}(s) \sigma _{\alpha ^{-}(j)}(s,X_{s}) ds.
\end{equation*}
Consequently, the assertion follows after taking the derivative with
respect to $t$.
\end{proof}
We remark that the propagator\index{propagator system} system \eqref{propagator} is recursive.
Let us give some simple examples to illustrate how \eqref{propagator}
can be solved explicitly.

\begin{ex}
\label{ex32}
(i) (\textit{Scaled Brownian motion with drift}) We start with the toy
example of a scaled Brownian motion with drift:
\begin{equation*}
X_{t} = bt +\sigma W_{t}.\vadjust{\goodbreak}
\end{equation*}
In this case we obviously have that $x_{\alpha }(t)=0$ for any
$\alpha \in \mathcal{I}$ with $|\alpha |\geq 2$. Applying formula
\eqref{propagator} we obtain the representation
\begin{equation*}
X_{t} = bt + \sigma \sum_{j=1}^{\infty }
\Biggl( \int_{0}^{t} e_{j}(s) ds
\Biggr) \int_{0}^{T} e_{j}(s)
dW_{s},
\end{equation*}
which is a well known Karhunen--Lo\'{e}ve expansion of the scaled
Brownian motion.

(ii) (\textit{Geometric Brownian motion}) Let us consider the geometric
Brownian motion defined via the SDE
\begin{equation*}
dX_{t} = bX_{t} dt + \sigma X_{t}
dW_{t} , \quad X_{0}=x_{0}>0.
\end{equation*}
In this setting the propagator\index{propagator system} system \eqref{propagator} translates to
\begin{align*}
x'_{\alpha }(t) &= b x_{\alpha }(t) + \sigma \sum
_{j=1}^{\infty }\sqrt{ \alpha
_{j}} e_{j}(t) x_{\alpha ^{-}(j)}(t),
\\
x_{\alpha }(0) &= 1_{\{\alpha =0\}} x_{0}.
\end{align*}
This system of ordinary differential equations can be solved
recursively. For $\alpha =0$ we have $x'_{0}(t) = b x_{0}(t)$ and hence
$x_{0}(t) = x_{0} \exp (bx)$. If $\alpha $ is the $j$th canonical unit
vector in $\mathcal{I}$ (and hence $|\alpha |=1$) we obtain the
differential equation
\begin{align*}
x'_{\alpha }(t) &= b x_{\alpha }(t) + \sigma
e_{j}(t) x_{0}(t),
\\
x_{\alpha }(0) &= 1_{\{\alpha =0\}} x_{0}.
\end{align*}
Hence, $x_{\alpha }(t)= x_{0} \sigma \exp (bx) \int_{0}^{t} e_{j}(s)
ds$. Following this recursion we obtain the general formula
\begin{equation*}
x_{\alpha }(t) = \frac{1}{\sqrt{\alpha !}} x_{0} \sigma
^{p} \exp (bx) \prod_{j=1}^{\infty }
\Biggl(\int_{0}^{t} e_{j}(s) ds
\Biggr)^{\alpha
_{j}}
\end{equation*}
for any $\alpha \in \mathcal{I}$ with $|\alpha |=p$.
\end{ex}
For a general specification of the drift coefficient $b$ and diffusion
coefficient $\sigma $ in model \eqref{sde} the propagator\index{propagator system} system
\eqref{propagator} cannot be solved explicitly. Thus, precise infinite
dimensional Wiener chaos expansion\index{Wiener chaos expansion} \eqref{choasx} is out of reach. For
simulation purposes it is an obvious idea to consider a finite subset
of $\mathcal{I}$ in the expansion \eqref{choasx}. We introduce the index
set
%
\begin{align}
\label{ipk} \mathcal{I}_{p,k}:= \bigl\{\alpha \in \mathcal{I}:~
\llvert \alpha \rrvert \leq p\  \text{ and }\  \alpha _{i}=0 \text{ for
all } i>k \bigr\}.
\end{align}
The approximation of $X_{t}$ is now defined via \eqref{xpk}:
%
\begin{align}
\label{xpkt} X_{t}^{p,k} = \sum
_{\alpha \in \mathcal{I}_{p,k}} x_{\alpha } (t) \varPsi ^{\alpha }.
\end{align}
We remark that the quantity $X_{t}^{p,k}$ is more useful than the Euler
approximation $X_{t}^{n}$ introduced 
in \eqref{Eulersde} if we are
interested in the approximation of the first two moments of
$X_{t}$. Indeed, the first two moments of $X_{t}^{p,k}$ are given
explicitly by
\begin{equation*}
\mathbh{E} \bigl[X_{t}^{p,k} \bigr] = x_{0}
(t) \quad \text{and} \quad \mathbh{E} \bigl[\bigl(X_{t}^{p,k}
\bigr)^{2} \bigr] = \sum_{\alpha \in \mathcal{I}_{p,k}}
x_{\alpha }^{2} (t),
\end{equation*}
while higher order moments can be computed via an application of the
multiplication formula \eqref{multiplication}.

The strong approximation error associated with the truncation 
\eqref{xpkt} has been previously studied in \cite{LMR97,L06} in a
slightly different context. In particular, in both papers the authors
consider one specific basis $(e_{i})_{i\geq 1}$ of $L^{2}([0,T])$
whereas we are interested in the asymptotic analysis for a general basis
$(e_{i})_{i\geq 1}$. While \cite{L06} mostly uses methods from
analysis, our approach is based upon Malliavin calculus\index{Malliavin calculus} and 
is close
in spirit to \cite{LMR97}. The main result of our paper gives an upper
bound on the $L^{2}$-error $\mathbh{E}[(X_{t}^{p,k} - X_{t})^{2}]$.

\begin{theo}
\label{th2}
Let $(X_{t})_{t \in [0,T]}$ be the solution of the SDE \eqref{sde}.
Suppose that the coefficient functions $b$ and $\sigma $ satisfy the
Lipschitz and linear growth conditions\index{linear growth conditions}
%
\begin{align}
\label{growth} \bigl\llvert b(t,x) - b(t,y) \bigr\rrvert + \bigl\llvert \sigma
(t,x) - \sigma (t,y) \bigr\rrvert &\leq K \llvert x-y \rrvert , \quad t\in
[0,T],
\\[1.5 ex] \bigl\llvert b(t,x) \bigr\rrvert ^{2} + \bigl\llvert \sigma
(t,x) \bigr\rrvert ^{2} &\leq K^{2} \bigl(1+ \llvert x
\rrvert ^{2} \bigr), \quad t\in [0,T].
\nonumber
\end{align}
Moreover, assume that $b, \sigma \in C^{1,\infty }([0,T] \times
\mathbh{R})$, where $C^{1,\infty }([0,T] \times \mathbh{R})$ denotes the
space of functions $f:[0,T] \times \mathbh{R}\to \mathbh{R}$ that are
once continuously differentiable in the first component and infinitely
differentiable in the second component, such that
\begin{align*}
\biggl\llvert \frac{\partial ^{l+m}}{\partial t^{l} \partial x^{m}} b(t,x) -
\frac{\partial ^{l+m}}{\partial t^{l} \partial x^{m}}b(t,y) \biggr
\rrvert + \biggl\llvert \frac{\partial ^{l+m}}{\partial t^{l} \partial x^{m}} \sigma (t,x) -
\frac{\partial ^{l+m}}{\partial t^{l} \partial x^{m}}
\sigma (t,y) \biggr\rrvert &\,{\leq}\, K \llvert x\,{-}\,y \rrvert ,
\end{align*}
for $t\in [0,T]$, any $l=0,1$, and $m\geq 0$. Then it holds that
%
\begin{align}
\label{estimate} \mathbh{E}\bigl[\bigl(X_{t}^{p,k} -
X_{t}\bigr)^{2}\bigr] \leq C \bigl(1+{x_{0}}^{2}
\bigr) \Biggl( \frac{1}{(p+1)!} + \sum_{l=k+1}^{\infty }
\Biggl( E_{l}^{2}(t) + \int_{0}^{t}
E_{l}^{2}(\tau ) d\tau \Biggr) \Biggr),
\end{align}
where $C=C(t,K)$ is a positive constant and the function $E_{l}(t)$ is
defined by
%
\begin{align}
\label{defE} E_{l}(t):= \int_{0}^{t}
e_{l}(s) ds.
\end{align}
\end{theo}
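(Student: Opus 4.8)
The plan is to exploit the orthonormality of the family $\{\varPsi^\alpha : \alpha \in \mathcal{I}\}$ in $L^2(\varOmega,\mathcal{F},\mathbh{P})$. Since both $X_t$ and $X_t^{p,k}$ are expressed in this basis, Parseval's identity (equivalently, the isometry \eqref{isometry}) gives at once
\[
\mathbh{E}\bigl[(X_t^{p,k}-X_t)^2\bigr]=\sum_{\alpha\in\mathcal{I}\setminus\mathcal{I}_{p,k}}x_\alpha^2(t).
\]
Because $\mathcal{I}\setminus\mathcal{I}_{p,k}\subseteq\{|\alpha|>p\}\cup\{\alpha:\alpha_i>0\text{ for some }i>k\}$, I would control the error by the sum of a chaos-order tail $(A)=\sum_{|\alpha|>p}x_\alpha^2(t)$ and a basis-truncation tail $(B)=\sum_{\alpha:\,\exists\,i>k,\,\alpha_i>0}x_\alpha^2(t)$, estimating each separately. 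These two pieces are responsible for the $1/(p+1)!$ term and the $\sum_{l>k}$ term in \eqref{estimate}, respectively.

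For $(A)$ I would pass to multiple integrals. Writing $X_t=\sum_n I_n(g_n)$ with symmetric kernels $g_n$, the restriction of $\{\varPsi^\alpha\}$ to $\{|\alpha|=n\}$ spans the $n$th Wiener chaos, so $\sum_{|\alpha|=n}x_\alpha^2(t)=n!\,\|g_n\|^2_{\mathbh{H}^{\otimes n}}$ by \eqref{isometry}. The Stroock formula $g_n=\frac{1}{n!}\mathbh{E}[D^nX_t]$, consistent with \eqref{xformula}, then yields $\sum_{|\alpha|=n}x_\alpha^2(t)=\frac{1}{n!}\|\mathbh{E}[D^nX_t]\|^2_{\mathbh{H}^{\otimes n}}$, so the task reduces to controlling the iterated Malliavin derivatives. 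Iterating \eqref{MalSDE}, each $D^nX_t$ solves a linear SDE driven by lower-order derivatives, and under the linear-growth and bounded-derivative hypotheses a Gronwall argument gives a moment bound $\mathbh{E}[\|D^nX_t\|^2_{\mathbh{H}^{\otimes n}}]\le C(1+x_0^2)M^n$ with $M=M(t,K)$; Jensen's inequality transfers this to $\|\mathbh{E}[D^nX_t]\|^2$. Summing $\frac{1}{n!}M^n$ over $n>p$ via $\sum_{n>p}M^n/n!\le e^M M^{p+1}/(p+1)!$ then produces the $1/(p+1)!$ decay rate, the factorial dominating the exponential growth in $n$.

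For $(B)$ I would work from the integrated propagator $x_\alpha(t)=x_0\,1_{\{\alpha=0\}}+\int_0^t[b_\alpha(s,X_s)+\sum_j\sqrt{\alpha_j}\,e_j(s)\sigma_{\alpha^-(j)}(s,X_s)]\,ds$ of Theorem \ref{th1}. A multi-index contributing to $(B)$ has $\alpha_l>0$ for some $l>k$, so each such coefficient is ultimately built from time integrals of the form $\int_0^t e_l(s)\,g(s)\,ds$ against functions $g$ that, thanks to $b,\sigma\in C^{1,\infty}$, are differentiable in $s$ with derivatives controlled by the Lipschitz and growth constants (via It\^o's formula for the deterministic map $s\mapsto\mathbh{E}[\,\cdots\,]$). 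Integration by parts in the time variable, using $E_l(s)=\int_0^s e_l$, turns such an integral into a boundary term $E_l(t)g(t)$ and a remainder $-\int_0^t E_l(s)g'(s)\,ds$; squaring and applying Cauchy--Schwarz to the remainder generates precisely the two quantities $E_l^2(t)$ and $\int_0^t E_l^2(\tau)\,d\tau$. Summing over $l>k$ and over the relevant multi-indices yields the second term of \eqref{estimate}.

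The main obstacle is the uniform control of the higher-order contributions in both pieces. For $(A)$ it is the growth-in-$n$ estimate for the iterated Malliavin derivatives, which must be sharp enough that the $1/n!$ weight forces the factorial tail; for $(B)$ it is the combinatorial bookkeeping over multi-indices with $\alpha_l>0$, ensuring that the recursively defined coefficients $b_\alpha,\sigma_{\alpha^-(j)}$ feed back only $E_l$-type quantities with weights that remain summable over $\alpha$ and over $l>k$. Both reduce to establishing recursive a priori bounds on the functions $x_\alpha(t)$ (or on $D^nX_t$) with constants depending only on $t$ and $K$, after which Gronwall's inequality together with the Parseval identity $\sum_l E_l^2(t)=\|1_{[0,t]}\|^2_{\mathbh{H}}=t$ closes the argument.
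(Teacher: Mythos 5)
Your decomposition into a chaos-order tail $(A)$ and a basis-truncation tail $(B)$, and your entire treatment of $(A)$, coincide with the paper's argument: the paper also uses $\sum_{|\alpha|=n}x_\alpha^2(t)=\frac{1}{n!}\|\mathbh{E}[D^nX_t]\|^2_{\mathbh{H}^{\otimes n}}$ via the Stroock kernels $\xi_n=\frac{1}{n!}\mathbh{E}[D^nX_t]$, proves $\mathbh{E}[(D^n_{s_1,\ldots,s_n}X_t)^2]\le C^n(1+x_0^2)$ by induction on the linear SDEs satisfied by the iterated Malliavin derivatives (its Proposition 4.1), and sums $\sum_{n>p}(Ct)^n/n!\le C/(p+1)!$. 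That half is sound.

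The gap is in $(B)$. You propose to extract the factor $e_l$ from the integrated propagator system, claiming each coefficient with $\alpha_l>0$, $l>k$, is ``ultimately built from time integrals of the form $\int_0^t e_l(s)g(s)\,ds$.'' But the propagator $x_\alpha'(t)=b_\alpha(t,X_t)+\sum_j\sqrt{\alpha_j}\,e_j(t)\sigma_{\alpha^-(j)}(t,X_t)$ contains the term $b_\alpha(t,X_t)$, the $\alpha$-th chaos coefficient of $b(t,X_t)$, in which no explicit $e_l$ appears; its dependence on $e_l$ is buried inside a further chaos expansion, so the integration by parts you describe cannot be applied to it directly, and unwinding the recursion to make $e_l$ explicit is precisely the combinatorial bookkeeping you defer. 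The paper sidesteps this entirely by never returning to the propagator for the error bound: it uses the closed-form representation $x_\alpha(t)=\frac{n!}{\sqrt{\alpha!}}\int^{(n);t}\xi_n(\mathbf{t}^n;t)\,\widetilde e_\alpha(\mathbf{t}^n)\,d\mathbf{t}^n$, where the factor $e_l$ (with $l=d(\alpha)$ the largest active index) sits explicitly in the symmetrised product $\widetilde e_\alpha(\mathbf{t}^n)=\sum_j e_{l}(t_j)\widetilde e_{\alpha^-(l)}(\mathbf{t}^n_j)$ and can be peeled off; integration by parts on the simplex then produces the boundary term $E_l^2(t)$ and the remainder $\int_0^tE_l^2(\tau)d\tau$ after Cauchy--Schwarz, with the time-derivative of $\xi_n$ controlled again through the Malliavin-derivative SDEs. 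A second ingredient you are missing is the resummation over multi-indices: the paper bounds $\sum_{|\alpha|=n,\,d(\alpha)=l}x_\alpha^2(t)\le n^2\sum_{|\beta|=n-1}(\cdots)^2=n^2\,\mathbh{E}[(I_{n-1}(\psi_l))^2]$ by recognising the reduced sum over $\beta=\alpha^-(l)$ as a Parseval identity for the random variable $I_{n-1}(\psi_l)$; without some such device your sum over the ``relevant multi-indices'' has no reason to stay bounded with constants depending only on $t$ and $K$. So the skeleton is right, but the route you chose for $(B)$ would not close as written.
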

Let us give some remarks about the statement \eqref{estimate}. First of
all, recalling the Karhunen--Lo\'{e}ve expansion $W_{t} = \sum_{l=1}
^{\infty } E_{l}(t) \int_{0}^{T} e_{l}(s) dW_{s}$, we readily deduce
that
\begin{align*}
\mathbh{E}\bigl[W_{t}^{2}\bigr] &= \sum
_{l=1}^{\infty } E_{l}^{2} (t)
< \infty \quad \text{and}
\\[1.5 ex] \int_{0}^{t} \mathbh{E}
\bigl[W_{s}^{2}\bigr] ds &= \sum
_{l=1}^{\infty } \int_{0}^{t}
E_{l}^{2}(\tau ) d\tau < \infty .
\end{align*}
Hence, the upper bound on the right-hand side of \eqref{estimate} indeed
converges to $0$ when $p,k\to \infty $. We also note that the error
associated with the truncation of the chaos order does not depend on the
particular choice of the basis $(e_{i})_{i\geq 1}$, while the error
associated with truncation of basis strongly depends on $(e_{i})_{i
\geq 1}$ (which is not really surprising). Furthermore,\vadjust{\goodbreak} we remark that
it is extremely computationally costly to compute all coefficients
$x_{\alpha }(t)$, $\alpha \in \mathcal{I}_{p,k}$, for a large chaos
order $p$. Thanks to the first bound $((p+1)!)^{-1}$ 
in \eqref{estimate} it is sufficient to use small values of $p$ in
practical situations (usually $p\leq 4$).

\begin{ex}
\label{ex}
Let us explicitly compute the last terms of the upper bound
\eqref{estimate} for two prominent bases of $L^{2}([0,1])$.

(i) (\textit{Trigonometric basis\index{trigonometric basis}}) Consider the orthonormal basis\index{orthonormal basis}
$(e_{i})_{i\geq 1}$ given by
\begin{equation*}
e_{1}(t)=1, \qquad e_{2j}(t)= \sqrt{2} \sin (2\pi jt ),
\qquad e_{2j+1}(t)= \sqrt{2} \cos (2\pi jt ), \quad j \geq 1.
\end{equation*}
In this setting we obtain that
\begin{align*}
E_{2j}^{2} (t) = \frac{1}{2\pi ^{2} j^{2}} \bigl(1-\cos (2
\pi jt) \bigr) ^{2}, \qquad E_{2j+1}^{2} (t) =
\frac{1}{2\pi ^{2} j^{2}} \bigl(1-\sin (2\pi jt) \bigr) ^{2},
\end{align*}
for any $j\geq 1$. Consequently, we deduce that
%
\begin{align}
\sum_{l=k+1}^{\infty } \Biggl(
E_{l}^{2}(t) + \int_{0}^{t}
E_{l}^{2}( \tau ) d\tau \Biggr) \leq
\frac{C}{k} \label{eq_rate_Trigonometric}
\end{align}
for some $C>0$.

(ii) (\textit{Haar basis\index{Haar basis}}) The Haar basis\index{Haar basis} is a collection of functions
\begin{equation*}
\bigl\{e_{0}, e_{j,n}:~j=1,\ldots , 2^{n-1},
n\geq 1 \bigr\}
\end{equation*}
defined as follows:
\begin{align*}
e_{0}(t)=1, \qquad e_{j,n}(t) = %
\begin{cases}
2^{(n-1)/2}:
& t \in [2^{-n+1}(j-1), 2^{-n}(2j-1)\xch{],}{]}
\\
-2^{(n-1)/2}:
& t \in [2^{-n}(2j-1), 2^{-n+1}j\xch{],}{]}
\\
0:
& \xch{\text{else}.}{\text{else}}
\end{cases}
\end{align*}
In this case we deduce the following representation for $E_{j,n}(t)$:
\begin{align*}
E_{j,n}(t) = %
\begin{cases}
2^{(n-1)/2} (t-2^{-n+1}(j-1)):
& t \in [2^{-n+1}(j-1), 2^{-n}(2j-1)\xch{],}{]}
\\
2^{(n-1)/2} (2^{-n+1}j-t):
& t \in [2^{-n}(2j-1), 2^{-n+1}j\xch{],}{]}
\\
0:
& \xch{\text{else}.}{\text{else}}
\end{cases} %
\end{align*}
Since $\max_{t \in [0,1]}E_{j,n}^{2}(t) = 2^{-(n+1)}$ and $E_{j,n}(t)
\neq 0$ only for $t \in [2^{-n+1}(j-1),2^{-n+1}j]$, we finally obtain
that
%
\begin{align}
\sum_{l=n+1}^{\infty } \sum
_{j=1}^{2^{n}} \Biggl( E_{j,l}^{2}(t)
+ \int_{0}^{t} E_{j,l}^{2}(
\tau ) d\tau \Biggr) \leq C 2^{-n}. \label{eq_rate_Haar}
\end{align}
Note that the exponential asymptotic rate for a fixed $p$th order Wiener
chaos\index{Wiener chaos} is equivalent to $\mathcal{O}(k^{-1})$, if we choose $k=2^{n}$
basis elements $(e_{i})$. We will come back to these two bases in
Section~\ref{sec5} where we provide exemplary numerical results that
highlight this identical asymptotic rate, but also the different error
distributions over time.
\end{ex}

\section{Proofs}%
\label{sec4}
Throughout the proofs $C=C(t,K)$ denotes a generic positive constant,
which might change from line to line. We start with a proposition that
gives an upper bound for the $L^{2}$-norm of the Malliavin derivatives\index{Malliavin derivative}
of $X_{t}$.\vadjust{\goodbreak}

\begin{prop}
\label{prop1}
Under conditions of Theorem \ref{th2} we obtain the estimate
%
\begin{align}
\label{Malestimate} \mathbh{E} \bigl[ \bigl( D^{n}_{s_{1}, \ldots , s_{n}}
X_{t} \bigr) ^{2} \bigr] \leq C^{n}
\bigl(1+x_{0}^{2}\bigr).
\end{align}
\end{prop}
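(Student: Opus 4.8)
The plan is to derive \eqref{Malestimate} from the linear stochastic differential equation \eqref{MalSDE} satisfied by the Malliavin derivative of $X_t$, combined with an induction on the derivative order $n$ and Gronwall's lemma. A preliminary observation is that the Lipschitz bound in \eqref{growth} forces the first spatial derivatives $\partial_x b$ and $\partial_x \sigma$ to be bounded by $K$; more generally, the assumption that $\partial^{l+m}_{t^l x^m} b$ and $\partial^{l+m}_{t^l x^m}\sigma$ are globally Lipschitz in $x$ for $l=0,1$ and all $m\geq 0$ implies that every spatial derivative $\partial_x^j b(t,\cdot)$ and $\partial_x^j \sigma(t,\cdot)$ of order $j\geq 1$ is bounded by $K$. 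Together with the standard moment estimate $\mathbh{E}[|X_t|^q]\leq C(1+|x_0|^q)$, valid for every $q\geq 2$ by the linear growth condition, the Burkholder--Davis--Gundy inequality and Gronwall's lemma, these are the only analytic inputs I need.

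For the base case $n=1$ I would square the identity \eqref{MalSDE}, take expectations, and estimate the three contributions separately: the initial term $\sigma(s,X_s)$ via linear growth, the drift integral via the Cauchy--Schwarz inequality, and the stochastic integral via the It\^o isometry. Using $|\partial_x b|,|\partial_x\sigma|\leq K$, this yields for fixed $s$ an inequality of the form $f(t)\leq C(1+x_0^2)+C\int_s^t f(u)\,du$ with $f(t)=\mathbh{E}[(D_s X_t)^2]$, and Gronwall's lemma gives $f(t)\leq C(1+x_0^2)$, the exponential factor being absorbed into $C=C(t,K)$.

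For the induction step I would apply $D_{s_2},\ldots,D_{s_n}$ to \eqref{MalSDE}, using that the Malliavin derivative commutes with Lebesgue integration and produces, for the stochastic integral, an additional boundary term together with the differentiated integrand. By the Fa\`a di Bruno/Leibniz rule this expresses $D^n_{s_1,\ldots,s_n}X_t$ as the solution of a \emph{linear} SDE of the same type as \eqref{MalSDE}, namely
\begin{align*}
D^n_{s_1,\ldots,s_n}X_t = \xi_{s_1,\ldots,s_n} + \int_s^t \partial_x b(u,X_u)\,D^n_{s_1,\ldots,s_n} X_u\,du + \int_s^t \partial_x\sigma(u,X_u)\,D^n_{s_1,\ldots,s_n} X_u\,dW_u,
\end{align*}
where the forcing term $\xi$ is a finite sum of products of spatial derivatives $\partial_x^j b$, $\partial_x^j\sigma$ (all of order $j\geq 1$, hence bounded by $K$) with Malliavin derivatives of $X$ of order strictly less than $n$. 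Crucially, the unbounded zeroth-order coefficients never enter $\xi$, since computing any Malliavin derivative of $\sigma(s,X_s)$ differentiates $\sigma$ at least once. The norm of $\xi$ is controlled by the induction hypothesis, and the same It\^o isometry and Gronwall scheme as in the base case then upgrades the bound to $D^n$, picking up one additional factor of $C$ and yielding $C^n(1+x_0^2)$.

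The main obstacle is the treatment of products: bounding $\xi$ requires estimating norms of products $D^{j_1}X\cdots D^{j_k}X$ of several lower-order derivatives, which by H\"older's inequality demands control in $L^q$ for $q>2$ rather than merely $L^2$. I would therefore run the induction simultaneously over all $q\geq 2$, proving $\mathbh{E}[|D^n_{s_1,\ldots,s_n}X_t|^q]\leq C^n_q(1+|x_0|^q)$ with the Burkholder--Davis--Gundy inequality replacing the It\^o isometry, and specialise to $q=2$ only at the end. The remaining difficulty is purely bookkeeping: tracking the combinatorial number of terms in the Fa\`a di Bruno expansion and the boundary contributions so that the constant retains the clean geometric form $C^n$.
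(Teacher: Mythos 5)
Your proposal is correct and follows essentially the same route as the paper: induction on $n$, the linear SDE satisfied by $D^{n}_{s_1,\ldots,s_n}X_t$ whose forcing terms are Fa\`a di Bruno sums of products of lower-order Malliavin derivatives against spatial derivatives of $b$ and $\sigma$ (bounded by $K$ thanks to the Lipschitz hypotheses), and a Gronwall-type moment estimate --- the paper merely packages this last step as an application of the Karatzas--Shreve moment bound to the augmented diffusion $Y^{(n)}=(X_t, D_{s_1}X_t,\ldots,D^{n}_{s_1,\ldots,s_n}X_t)$. Your explicit strengthening of the induction hypothesis to $L^q$ bounds for all $q\geq 2$, needed to control the products in the forcing and initial terms via H\"older's inequality, addresses a point the paper passes over silently, so it is a careful refinement of the same argument rather than a different approach.
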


\begin{proof}
We show the assertion of Proposition \ref{prop1} by induction over
$n$. For $n=0$ the result is a well-known consequence of the Lipschitz
and linear growth conditions\index{linear growth conditions} 
\eqref{growth} on the functions $b$ and
$\sigma $ (see e.g. \cite[Theorem 2.9, page 289]{KS} which relates the
second moment of $X_{t}$ to the second moment of the initial condition
$X_{0}=x_{0}$). For $n=1$ we known from the formula \eqref{MalSDE} that
the process $(D_{s}X_{t})_{s\in [0,T]}$ satisfies the SDE
\begin{align*}
D_{s}X_{t} = \sigma (s, X_{s}) + \int
_{s}^{t} \frac{\partial }{\partial
x}
b(u,X_{u})D_{s}(X_{u}) du + \int
_{s}^{t} \frac{\partial }{\partial
x} \sigma
(u,X_{u}) D_{s}(X_{u})
dW_{u},
\end{align*}
for $s\leq t$, and $D_{s}X_{t} =0$ for $s>t$. Next, we introduce the
two-dimensional stochastic process $(Y^{(1)}_{s;t})_{t \geq s}$ via
\begin{equation*}
Y^{(1)}_{s;t} := (X_{t},
D_{s}X_{t}).
\end{equation*}
Note that the process $(Y^{(1)}_{s;t})_{t \geq s}$ satisfies a
two-dimensional SDE with initial condition given by $Y^{(1)}_{s;s}$. In
particular, setting $Y^{(1)}_{s;t} =   (Y^{(1.1)}_{s;t},Y^{(1.2)}
_{s;t}   )$, we have the representation
\begin{equation*}
D_{s}X_{t} = \sigma \bigl(s, Y^{(1.1)}_{s;s}
\bigr) + \int_{s}^{t} \overline{b}
\bigl(u,Y^{(1)}_{s;u} \bigr) du + \int_{s}^{t}
\overline{ \sigma } \bigl(u,Y^{(1)}_{s;u} \bigr)
dW_{u},
\end{equation*}
where $\overline{b} (u,y)= \frac{\partial }{\partial x} b(u,y_{1}) y
_{2}$ and $\overline{\sigma } (u,y)= \frac{\partial }{\partial x}
\sigma (u,y_{1}) y_{2}$ for $y=(y_{1},y_{2})$. Applying again the
estimates of \cite[Theorem 2.9, page 289]{KS} to the diffusion process
$(Y^{(1)}_{s;t})_{t \geq s}$, we conclude the estimate
%
\begin{align}\nonumber
\mathbh{E} \bigl[ ( D_{s} X_{t} )^{2}
\bigr] &\leq \mathbh{E} \bigl[ \bigl\llVert Y^{(1)}_{s,t}
\bigr\rrVert ^{2} \bigr] \leq C \bigl(1+\mathbh{E} \bigl[ \bigl\llVert
Y^{(1)}_{s,s} \bigr\rrVert ^{2} \bigr] \bigr)
\\ &\leq C \bigl(1+\mathbh{E} \bigl[ X_{s} ^{2}
\bigr] + \mathbh{E} \bigl[ \sigma (s,X_{s})^{2} \bigr]
\bigr)
\nonumber
\\[1.5 ex] &\leq C\bigl(1+x_{0}^{2}\bigr) .
\end{align}
Now, we will perform the induction step. Notice that $D_{s}X_{t}
\in \mathbh{D}^{1,\infty }$, so its Malliavin derivative\index{Malliavin derivative} satisfies again
an SDE according to \cite[Theorem 2.2.2]{N}. We define the stochastic
process
\begin{equation*}
Y^{(n)}_{s_{1}, \ldots ,s_{n} ;t} := \bigl(X_{t},
D_{s_{1}}X_{t}, D ^{2}_{s_{1}, s_{2}}X_{t},
\ldots , D^{n}_{s_{1}, \ldots , s_{n}}X_{t} \bigr).
\end{equation*}
Assume that the assertion of Theorem \ref{th2} holds for all
$k=1,\ldots ,n-1$. In order to compute the estimate, we have to consider
the initial values of the SDE system satisfied by $Y_{s_{1},\ldots ,s
_{n};\,t}^{(n)}$. The formula below can be found in
\cite[Theorem 2.2.2]{N}. Before we state it, we need some additional
notation. The stochastic process $D^{m}X_{t} =   \{D^{m}_{s_{1},
\ldots ,s_{m}}X_{t} \bigm | (s_{1},\ldots ,s_{m})\in [0,T]  \}$
depends on the $m$ time points $s_{1},\ldots ,s_{m}$. For any subset
$J=\{j_{1}<\cdots <j_{\eta }\}$ of $\{1,\ldots ,m\}$ with $|J|=\eta
\leq m$ elements, denote $s(J) = (s_{j_{1}},\ldots ,s_{j_{\eta }})$.
Further on, we define
%
\begin{align}
\label{frakz} \mathfrak{z}(t,s_{1},\ldots ,s_{m}) =
\sum_{\mathfrak{P}^{m}} \frac{
\partial ^{m}}{\partial x^{m}} \sigma
(t,X_{t}) D^{|s(J_{1})|}_{s(J
_{1})}X_{t}
\cdots D^{|s(J_{\nu })|}_{s(J_{\nu })}X_{t},
\end{align}
and
%
\begin{align}
\label{fraky} \mathfrak{y}(t,s_{1},\ldots ,s_{m}) =
\sum_{\mathfrak{P}^{m}} \frac{
\partial ^{m}}{\partial x^{m}}
b(t,X_{t}) D^{|s(J_{1})|}_{s(J_{1})}X _{t}
\cdots D^{|s(J_{\nu })|}_{s(J_{\nu })}X_{t},
\end{align}
where the sums run over the set $\mathfrak{P}^{m}$ of all partitions
$J_{1}\cup \cdots \cup J_{\nu }$ of $\{1,\ldots ,m\}$, where
$J_{1}, \ldots , J_{\nu }$ are disjoint sets. We determine $
\mathfrak{z}(t) = \sigma (t,X_{t})$ as well. With these notations at
hand, we find by \eqref{MalSDE} and induction that the $n$th order
Malliavin derivative $D^{n}_{s_{1},\ldots ,s_{n}}X_{t}$ satisfies the
linear SDE
%
\begin{align}
\label{Dsn} D^{n}_{s_{1},\ldots ,s_{n}}X_{t} &= \sum
_{i=1}^{n} \mathfrak{z}(s_{i},s
_{1},\ldots ,s_{i-1},s_{i+1},\ldots
,s_{n})
\nonumber
\\
& \qquad + \int_{\hat{s}}^{t}
\mathfrak{y}(u,s_{1},\ldots ,s_{n}) du + \int
_{\hat{s}}^{t}\mathfrak{z}(u,s_{1},
\ldots ,s_{n})dW_{u}
\end{align}
for $\hat{s}:=\max \{s_{1},\ldots ,s_{n}\}\leq t$ and $D^{n}_{s_{1},
\ldots ,s_{n}}X_{t}=0$ else. Hence, its initial value is given by
\begin{align*}
\sum_{i=1}^{n} \mathfrak{z}(s_{i},
s_{1}, \ldots , s_{i-1}, s_{i+1}, \ldots ,
s_{n}),
\end{align*}
where
\begin{align*}
&\mathfrak{z}(s_{1},\ldots ,s_{n}) =
\frac{\partial ^{n}}{\partial x
^{n}}\sigma (s_{1},X_{s_{1}}\xch{)}{) \times}
\\
& \qquad \times \bigl(D_{s_{2}}X_{s_{1}}\cdots
D_{s_{n}}X_{s_{1}} + D ^{2}_{s_{2},s_{3}}X_{s_{1}}
\cdot D_{s_{4}}X_{s_{1}}\cdots D_{s_{n}}X
_{s_{1}} + \cdots + D^{n-1}_{s_{2},\ldots ,s_{n}}X_{s_{1}}
\bigr).
\end{align*}
Finally, we apply \cite[Theorem 2.9, page 289]{KS} as for the case
$n=1$ and taking into account that the assertion holds for $k=1,
\ldots , n-1$:
\begin{align*}
&\mathbh{E} \bigl[ \bigl\llvert D^{n}_{s_{1},\ldots ,s_{n}}X_{t}
\bigr\rrvert ^{2} \bigr]\leq \mathbh{E} \bigl[ \bigl\llVert
Y_{s_{1},\ldots ,s_{n};\,t}^{(n)} \bigr\rrVert ^{2} \bigr]
\\ & \qquad \leq C \Biggl( 1 + \mathbh{E} \bigl[ \llvert
X_{\hat{s}} \rrvert ^{2} \bigr] + \cdots + \mathbh{E} \Biggl[
\Biggl\llvert \sum_{i=1}^{k}
\mathfrak{z}(s_{i},s _{1},\ldots s_{i-1},s_{i+1},
\ldots ,s_{n}) \Biggr\rrvert ^{2} \Biggr] \Biggr)
\\ & \qquad \leq C^{n} \bigl(1+{x_{0}}^{2}
\bigr) .
\end{align*}
This completes the proof of Proposition \ref{prop1}.
\end{proof}

Now, we proceed with the proof of the main result of Theorem
\ref{th2}, which follows the ideas of
\cite[Proof of Theorem 2.2]{LMR97}. Observe the decomposition
%
\begin{align}
\label{decomposition} \mathbh{E} \bigl[ \bigl(X_{t}^{p,k}
-X_{t} \bigr)^{2} \bigr] \leq 2\underbrace{\sum
_{n=p+1}^{\infty }~\sum_{|\alpha |=n}
x_{\alpha
}^{2}(t)}_{=:A_{1}(p)} + 2\underbrace{\sum
_{l=k+1}^{\infty }~\sum
_{n=0} ^{p}~\sum_{\substack{|\alpha |=n\\[0.1em]
d(\alpha )=l}}
{x_{\alpha }} ^{2}(t)}_{=:A_{2}(p,k)},
\end{align}
where $d(\alpha ):= \max \{i\geq 1:~\alpha _{i}>0\}$. To determine an
upper bound for $A_{1}(p)$, we use the chaos expansions\index{chaos expansion}
\eqref{choasx} and \eqref{Wienerchaos2} of $X_{t}$, i.e.,
\begin{align*}
X_{t} = \sum_{n=0}^{\infty }~
\sum_{|\alpha |=n} x_{\alpha }(t) \varPsi
^{\alpha } = \sum_{n=0}^{\infty }I_{n}
\bigl(\xi _{n}\bigl(\mathbf{t}^{n}; t\bigr)\bigr),
\end{align*}
with $\mathbf{t}^{n} = (t_{1},\ldots ,t_{n})$ and symmetric kernel
functions $\xi _{n}(\mathbf{t}^{n};t)$ being defined by
\begin{equation*}
\xi _{n}(\cdot ;t) = \frac{1}{n!}\mathbh{E} \bigl[
D^{n} X_{t} \bigr]
\end{equation*}
for all $n\in \mathbh{N}_{0}$. We conclude that
%
\begin{align}
\label{x2identity} \sum_{|\alpha |=n} {x_{\alpha }}^{2}(t)
&= \mathbh{E} \biggl[ \biggl(\sum_{|\alpha |=n}
x_{\alpha }(t)\varPsi ^{\alpha } \biggr)^{2} \biggr] =
\mathbh{E} \bigl[ \bigl(I_{n}\bigl(\xi _{n}\bigl(
\mathbf{t}^{n};t\bigr)\bigr) \bigr)^{2} \bigr]
\nonumber
\\[1.5 ex] &= n! \bigl\langle \xi _{n}\bigl(\mathbf{t}^{n};
\,t\bigr),\xi _{n}\bigl(\mathbf{t} ^{n};\,t\bigr) \bigr
\rangle _{\mathbh{H}}
\nonumber
\\[1.5 ex] &= (n!)^{2} \int^{(n);t} \bigl(\xi
_{n}\bigl(\mathbf{t}^{n};\,t\bigr) \bigr)
^{2} d\mathbf{t}^{n},
\end{align}
where we abbreviate
\begin{align*}
\int^{(n);t} f(\cdot ) d\mathbf{t}^{n} :=\int
_{0}^{t} \ldots \int_{0}
^{t_{2}} f(\cdot ) dt_{1} \ldots dt_{n}.
\end{align*}
Therefore, we deduce via \eqref{x2identity} that
%
\begin{align}
\label{A1bound} A_{1}(p) = \sum_{n=p+1}^{\infty }(n!)^{2}
\int^{(n);t} \bigl(\xi _{n}\bigl(
\mathbf{t}^{n};t\bigr) \bigr)^{2} d\mathbf{t}^{n}
= \sum_{n=p+1}^{\infty
}\int
^{(n);t} \mathbh{E} \bigl[ \bigl(D^{n}_{t_{1},\ldots ,t_{n}}X
_{t} \bigr)^{2} \bigr] d\mathbf{t}^{n}.
\end{align}
Finally, using \eqref{A1bound} and applying Proposition \ref{prop1} we
obtain
%
\begin{align}
\label{A1estimate} A_{1}(p) &= \sum_{n=p+1}^{\infty }
\int^{(n);t} \mathbh{E} \bigl[ \bigl(D^{n}_{t_{1},\ldots ,t_{n}}X_{t}
\bigr)^{2} \bigr] d \mathbf{t}^{n}
\nonumber
\\ &\leq \bigl(1+{x_{0}}^{2} \bigr) \sum
_{n=p+1}^{\infty }C^{n} \int
^{(n);t} d\mathbf{t}^{n}
\nonumber
\\ &= \bigl(1+{x_{0}}^{2} \bigr) \sum
_{n=p+1}^{\infty } \frac{(Ct)^{n}}{n!}
\nonumber
\\ &\leq C \bigl(1+{x_{0}}^{2} \bigr)
\frac{1}{(p+1)!}.
\end{align}
The treatment of the term $A_{2}(p,k)$ is more involved. Recalling the
definition of $d(\alpha )= \max \{i\geq 1:~\alpha _{i}>0\}$, we introduce
the characteristic set $(i_{1}, \ldots , i_{n})$ of $\alpha \in
\mathcal{I}$ for $i_{1}\leq i_{2} \leq \cdots \leq i_{n}$ and
$|\alpha |=n$. It is defined as follows: $i_{1}$ is the index of the
first non-zero component of $\alpha $. If $\alpha _{i_{1}}=1$ then
$i_{2}$ is the index of the second non-zero component of $\alpha $;
otherwise $i_{1}=i_{2}=\cdots =i_{\alpha _{i_{1}}}$ and $i_{\alpha _{i
_{1}}+1}$ is the second non-zero component of $\alpha $. The same
operation is repeated for the index $i_{\alpha _{i_{1}}+1}$ and further
non-zero components of $\alpha $. In this fashion, the characteristic
set is constructed, resulting in the observation that $d(\alpha )=i
_{n}$. To give a simple example, consider the multiindex $\alpha =(2,0,1,4,0,0,
\ldots )\in \mathcal{I}$. Here the characteristic set of $\alpha $ is
given by
\begin{equation*}
(1,1,3,4,4,4,4).
\end{equation*}
For any $\alpha \in \mathcal{I}$ with $|\alpha |=n$ and a basis
$(e_{i})_{i\geq 1}$ of $\mathbh{H}=L^{2}([0,T])$, we denote by
$\widetilde{e}_{\alpha }(\mathbf{t}^{n})$ the (scaled) symmetrised form
of $\bigotimes e_{i}^{\otimes \alpha _{i}}$ defined via
%
\begin{equation}
\label{symme} \widetilde{e}_{\alpha }\bigl(\mathbf{t}^{n}
\bigr) := \sum_{\pi \in \mathfrak{P}
^{n}} e_{i_{1}}
(t_{\pi (1)} ) \cdots e_{i_{n}} (t _{\pi (n)} ),
\end{equation}
where $(i_{1}, \ldots , i_{n})$ is the characteristic set of
$\alpha $ and the sum runs over all permutations $\pi $ within the
permutation group $\mathfrak{P}^{n}$ of $\{1,\ldots ,n\}$. From
\eqref{xformula} we know that $x_{\alpha } (t) = \mathbh{E}  [ X
_{t} \varPsi ^{\alpha }  ]$. On the other hand, we have the
representation
\begin{equation*}
\varPsi ^{\alpha } = \frac{1}{\sqrt{\alpha !}} I_{ \llvert \alpha  \rrvert } \Biggl(
\bigotimes_{i=1} ^{\infty }e_{i}^{\otimes \alpha _{i}}
\Biggr) = \frac{1}{\sqrt{
\alpha !}} I_{ \llvert \alpha  \rrvert } \biggl( \frac{1}{ \llvert \alpha  \rrvert !}
\widetilde{e} _{\alpha } \biggr).
\end{equation*}
Now, for any $\alpha \in \mathcal{I}$ with $|\alpha |=n$, we obtain the
identity
%
\begin{align}
\label{xalphaformula} x_{\alpha }(t) = \frac{n!}{\sqrt{\alpha !}} \int
^{(n);t} \xi _{n}\bigl( \mathbf{t}^{n};t
\bigr) \widetilde{e}_{\alpha }\bigl(\mathbf{t}^{n}\bigr) d
\xch{\mathbf{t} ^{n},}{\mathbf{t} ^{n}.}
\end{align}
by \eqref{IBP}. Since
\begin{equation*}
\widetilde{e}_{\alpha }\bigl(\mathbf{t}^{n}\bigr)=\sum
_{j=1}^{n} e_{i_{n}}(t
_{j})\cdot \widetilde{e}_{\alpha ^{-}(i_{n})}\bigl(
\mathbf{t}_{j}^{n}\bigr),
\end{equation*}
where $\mathbf{t}_{j}^{n}$ is obtained from $\mathbf{t}^{n}$ by omitting
$t_{j}$ and $\alpha ^{-}(\cdot )$ denotes the diminished multi-index as
defined in \eqref{alphaminus}, we deduce that
%
\begin{align}
\label{xalphaidentity} x_{\alpha }(t) = \frac{n!}{\sqrt{\alpha !}} \sum
_{j=1}^{n}~ \int^{(n-1);t}
\Biggl( \int_{t_{j-1}}^{t_{j+1}} \xi _{n}
\bigl(\mathbf{t} ^{n};t\bigr)\,e_{i_{n}}(t_{j})
dt_{j} \Biggr) \widetilde{e}_{\alpha ^{-}(i
_{n})}\bigl(
\mathbf{t}_{j}^{n}\bigr) d\mathbf{t}_{j}^{n},
\end{align}
with $t_{0} := 0$ and $t_{n+1} := t$, by changing the order of
integration. Then for any $i_{n}=l\geq 1$ we integrate by parts to
deduce
%
\begin{align}
\label{int-by-parts-inner-part} \int_{t_{j-1}}^{t_{j+1}} \xi
_{n}\bigl(\mathbf{t}^{n};t\bigr)\,e_{l}(t_{j})dt
_{j} = \bigl[ \xi _{n}\bigl(\mathbf{t}^{n};t
\bigr)\,E_{l}(t_{j}) \bigr]_{t
_{j}=t_{j-1}}^{t_{j}=t_{j+1}}
- \int_{t_{j-1}}^{t_{j+1}} \frac{\partial
}{\partial t_{j}}\xi
_{n}\bigl(\mathbf{t}^{n};t\bigr)\,E_{l}(t_{j})dt_{j},
\end{align}
where
\begin{equation*}
E_{i}(s) = \int_{0}^{s}
e_{i}(u)du .
\end{equation*}
Now, we use substitution in \eqref{xalphaidentity} by renaming
$\mathbf{t}_{j}^{n}$ in the following way for each $j$: With
$s_{i}=t_{i}$ for all $i\leq j-1$ and $s_{i}=t_{i+1}$ for all
$i>j-1$, we have $\mathbf{s}^{n-1}:= \mathbf{t}_{j}^{n}$ by setting
$s_{0}=0$ and $s_{n}=t$. Moreover, we denote with $\mathbf{s}^{n-1,r}$,
$r=1,\ldots ,n-1$, the set that is generated from $\mathbf{s}^{n-1}$ by
taking $s_{r}$ twice. To finalize this notation, we set $\mathbf{s}
^{n-1,0} = (s_{0},s_{1},\ldots ,s_{n-1})$ and $\mathbf{s}^{n-1,n} = (s
_{1},\ldots ,s_{n-1},s_{n})$. Then
\begin{equation*}
\bigl[ \xi _{n}\bigl(\mathbf{t}^{n};t\bigr)
\,E_{l}(t_{j}) \bigr]_{t_{j}=t_{j-1}}
^{t_{j}=t_{j+1}} = \xi _{n}\bigl(\mathbf{s}^{n-1,j}\bigr)
\,E_{l}(s_{j}) - \xi _{n}\bigl(
\mathbf{s}^{n-1,j-1}\bigr)\,E_{l}(s_{j-1}), \ \
j=1,\ldots ,n.
\end{equation*}
Because $E_{l}(s_{0}) = E_{l}(0) = 0$ and $E_{l}(s_{n}) = E_{l}(t)$,
from \eqref{int-by-parts-inner-part} we see that by summing over $j$ all
terms except one cancel out. Hence, setting
\begin{align*}
\psi _{l}\bigl(\mathbf{s}^{n-1};t\bigr) &:= \xi
_{n}\bigl(\mathbf{s}^{n-1,n}\bigr)\,E_{l}(t)
\\[1.5 ex] & \qquad - \int_{0}^{s_{1}}
\frac{\partial }{\partial s_{1}}\xi _{n}\bigl(\tau , \mathbf{s}^{n-1};t
\bigr)\,E_{l}(\tau ) d\tau
\\[1.5 ex] & \qquad - \sum_{j=2}^{n-1}
\int_{s_{j-1}}^{s_{j}} \frac{\partial }{\partial s
_{j}}\xi
_{n}(\ldots ,s_{j-1},\tau ,s_{j+1},\ldots ;t)
\,E_{l}(\tau ) d \tau
\\[1.5 ex] & \qquad - \int_{s_{n-1}}^{t}
\frac{\partial }{ \partial s_{n}}\xi _{n}\bigl( \mathbf{s}^{n-1},\tau ;t
\bigr)\,E_{l}(\tau ) d\tau ,
\end{align*}
we obtain from \eqref{xalphaidentity}
\begin{align*}
\sum_{\substack{|\alpha |=n \\ i_{n} = d(\alpha ) = l}} {x_{\alpha }}
^{2}(t) &= \sum_{\substack{|\alpha |=n \\ i_{n} = d(\alpha ) = l}} \biggl(
\frac{n!}{\sqrt{\alpha !}} \int^{(n-1); t} \psi _{l}\bigl(
\mathbf{s}^{n-1};t\bigr)\,\widetilde{e}_{\alpha ^{-}(l)}\bigl(
\mathbf{s}^{n-1}\bigr) d \mathbf{s}^{n-1}
\biggr)^{2}
\\[1.5 ex] &\leq n^{2}\,\sum_{|\beta |=n-1}
\biggl( \frac{(n-1)!}{ \sqrt{
\beta !}} \int^{(n-1); t} \psi
_{l}\bigl(\mathbf{s}^{n-1};t\bigr)\,\widetilde{e}
_{\beta }\bigl(\mathbf{s}^{n-1}\bigr)d\mathbf{s}^{n-1}
\biggr)^{2},
\end{align*}
since $ \llvert {\alpha ^{-}(i_{|\alpha |}}) \rrvert  = |\alpha |-1$ and
$\alpha !\geq \alpha ^{-}(i_{|\alpha |})!$. In order to interpret the
last sum, consider the random variable $I_{n-1}   (\psi _{l}(
\mathbf{s}^{n-1};t)   )$. Then, following \eqref{x2identity} and
the identity \eqref{xalphaformula}, we conclude that
\begin{align*}
\mathbh{E} \bigl[ \bigl(I_{n-1} \bigl(\psi _{l}\bigl(
\mathbf{s}^{n-1};t\bigr) \bigr) \bigr)^{2} \bigr] &=
\bigl((n-1)!\bigr)^{2} \int^{(n-1);t} \psi
_{l} ^{2}\bigl(\mathbf{s}^{n-1};t\bigr) d
\mathbf{s}^{n-1}
\\[1.5 ex] &= \sum_{|\beta |=n-1}\! \biggl(
\frac{(n-1)!}{ \sqrt{\beta !}}\! \int^{(n-1); t}\! \psi _{l}\bigl(
\mathbf{s}^{n-1};t\bigr)\,\widetilde{e}_{\beta }\bigl(
\mathbf{s}^{n-1}\bigr)d\mathbf{s}^{n-1}
\biggr)^{2}.
\end{align*}
Thus, we get the estimate
\begin{equation*}
\sum_{\substack{|\alpha |=n
\\
i_{n} = l}} {x_{\alpha }}^{2}(t)
\leq (n!)^{2} \int^{(n-1);t} \psi
_{l}^{2}\bigl(\mathbf{s}^{n-1};t\bigr) d
\mathbf{s}^{n-1}.
\end{equation*}
Now, using the Cauchy--Schwarz inequality we deduce that
\begin{align*}
\psi _{l}^{2}\bigl(\mathbf{s}^{n-1};t\bigr)
&\leq (n+1) \Biggl(\xi _{n}^{2}\bigl(
\mathbf{s}^{n-1,n}\bigr)\,E_{l}^{2}(t)
\\[1.5 ex] &\qquad  + \int_{0}^{t} E_{l}^{2}(
\tau ) d\tau \cdot \sum_{j=1}^{n} \int
_{s_{j-1}}^{s_{j}} \biggl\llvert \frac{\partial }{\partial s_{j}}
\xi _{n}(\ldots s_{j-1},\tau , s_{j+1}, \ldots
;t ) \biggr\rrvert ^{2} d\tau \Biggr).
\end{align*}
Recall the identity $\xi _{n}(\cdot ;t) = (n!)^{-1}\mathbh{E}  [ D
^{n} X_{t}  ]$. Using similar arguments as in the proof of
Proposition \ref{prop1}, we obtain the inequality
\begin{equation*}
\biggl\llvert \frac{\partial }{\partial t_{i}}\xi _{n}(t_{1},\ldots
,t_{n};\,t) \biggr\rrvert ^{2} \leq \bigl(1+x_{0}^{2}
\bigr) \frac{C^{n}}{(n!)^{2}} , \quad i=1,\ldots ,n.
\end{equation*}
Since $|s_{j}-s_{j-1}|\leq t$, we finally conclude that
%
\begin{align}
\label{finalestimate} \psi _{l}^{2}\bigl(
\mathbf{s}^{n-1};t\bigr) \leq \bigl(1+x_{0}^{2}
\bigr) \frac{C^{n}(n+1)}{(n!)^{2}} \Biggl( E_{l}^{2}(t) + \int
_{0}^{t} E_{l} ^{2}(
\tau ) d\tau \Biggr).
\end{align}
Consequently, we deduce
\begin{align*}
\sum_{\substack{|\alpha |=n \\ i_{n}=l}} {x_{\alpha }}^{2}(t)
\leq \bigl(1+x _{0}^{2}\bigr) \frac{C^{n}(n+1)}{(n-1)!}
\Biggl( E_{l}^{2}(t) + \int_{0}
^{t} E_{l}^{2}(\tau ) d\tau \Biggr).
\end{align*}
Putting things together we conclude
%
\begin{align}\nonumber
A_{2}(p,k) &= \sum_{l=k+1}^{\infty }~
\sum_{n=1}^{p}~ \sum
_{\substack{|\alpha |=n \\ i_{n}=l}} {x_{\alpha }}^{2}(t)
\\[1.5 ex] &\leq \bigl(1+x_{0}^{2}\bigr) \sum
_{n=1}^{\infty } \frac{C^{n}(n+1)}{(n-1)!} \cdot \sum
_{l=k+1}^{\infty } \Biggl( E_{l}^{2}(t)
+ \int_{0}^{t} E _{l}^{2}(
\tau ) d\tau \Biggr)
\nonumber
\\[1.5 ex] &\leq C \bigl(1+x_{0}^{2}\bigr) \sum
_{l=k+1}^{\infty } \Biggl( E_{l}^{2}(t)
+ \int_{0}^{t} E_{l}^{2}(
\tau ) d\tau \Biggr).\label{A2estimate}
\end{align}
Combining \eqref{A1estimate} and \eqref{A2estimate} we obtain the result
of Theorem \ref{th2}.

%
\begin{figure}[t!]
\includegraphics{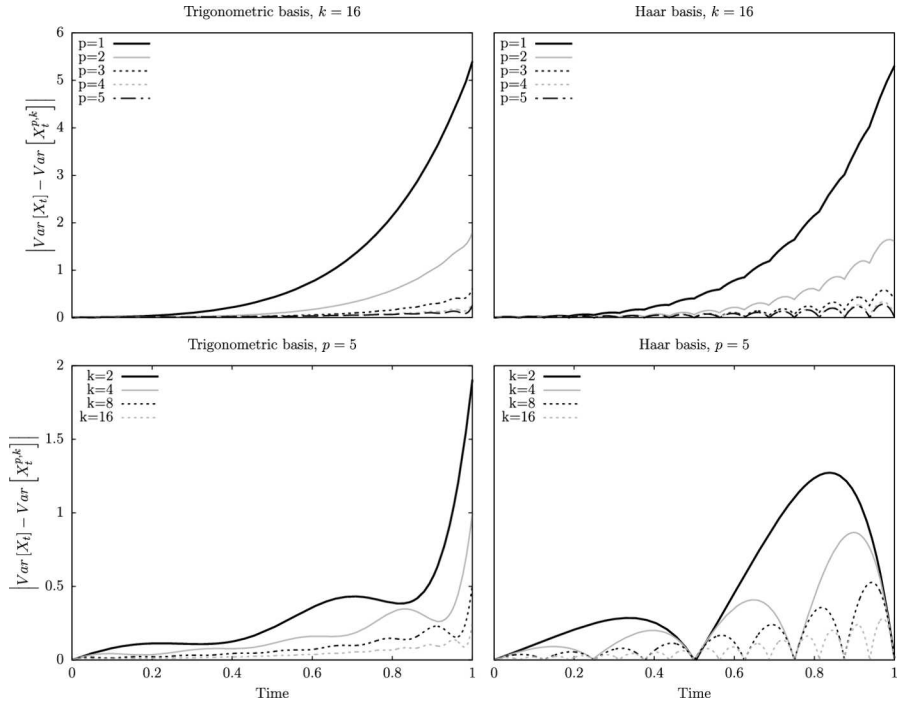}
\caption{Absolute errors of the variance as calculated with different
polynomial chaos expansions for the Geometric Brownian Motion,
$ \llvert  Var  [X_{t}  ] - Var  [X^{p,k}_{t}  ]
 \rrvert $. The trigonometric and Haar bases (with
$k=2^{n}$) from Example~\ref{ex} are shown}\label{fig1}
\end{figure}

\section{Numerical results and sparse truncation\index{sparse truncation}}%
\label{sec5}
To get an idea of the performance of the polynomial chaos expansion\index{polynomial chaos expansions} for
different bases and truncations, we apply the trigonometric basis\index{trigonometric basis} from
Example~\ref{ex} (i) and the Haar basis\index{Haar basis} from Example~\ref{ex} (ii) to
the geometric Brownian motion from Example~\ref{ex32},
%
\begin{align}
d X_{t} = \mu X_{t} dt + \sigma X_{t}
dW_{t}, \quad X_{0} = x_{0}
\end{align}
on the horizon $t \in [0,1]$ with $\mu =\sigma =x_{0}=1$. In this
setting we have the identity
\begin{equation*}
\text{Var}(X_{t}) = {x_{0}}^{2} \exp (2\mu
t ) \bigl(\exp \bigl(\sigma ^{2} t \bigr) - 1 \bigr),\vadjust{\goodbreak}
\end{equation*}
which can be used to compare the absolute errors of approximated
variances using different bases and values for $k$ and $p$. We used the
\texttt{ode45} solver with standard settings within
\texttt{Matlab 2017b} (64bit, single core) on an Intel Core i5-6300U
(2.4GHz, 8GB RAM).

\subsection{Absolute errors of variances}

Figure~\ref{fig1} shows the absolute errors between the variances of the
analytical solution and different polynomial chaos expansions.\index{polynomial chaos expansions}
One observes the fast decay of the approximation error for fixed $k$ and
increasing $p$, and vice versa. The main difference between the observed
errors is that the maximum of the trigonometric expansion is always at
the end of the time horizon at $t=1$, while the Haar basis\index{Haar basis} leads to
almost zero error on $k$ equidistant time points, including $t=0$ and
$t=1$.

Table~\ref{tab1} in the Appendix lists approximation errors, numbers of
coefficients, and computational times for several choices of $p$ and
$k$. The exponential increase in the number of coefficients with
increasing $p$ and $k$ leads to an exponential increase in runtime, as
expected. To overcome this issue, we propose a heuristic to reduce the
computational burden.

\subsection{Sparse truncation\index{sparse truncation}}%
\label{secsparse}
The information contained in the coefficient functions $x_{\alpha }(
\cdot )$ decays with increasing order $p$ of the basis polynomials
$\varPsi ^{\alpha }$ and the decaying rate of the Gaussian expansion, i.e.,
the index $k$ of the basis polynomials $e_{k}$ used for constructing
$\varPsi ^{\alpha }$ \cite{L06}.

Hence, we can define \emph{sparse index sets} for reducing the number
of multi-indices $\alpha $ from the full truncated index set
$\mathcal{I}_{p,k}$ without losing too much information, i.e., accuracy
in a numerical simulation.

\begin{definition}[Sparse truncation\index{sparse truncation} of first order]
\label{dfn:first-sparse-truncation}
Let $p$ be the maximum order of the index
$\alpha \in \mathcal{I}_{p,k}$. Then the
\emph{first order sparse index} $\mathbh{r}=(r_{1},\ldots ,r_{k})$
satisfies $p = r_{1}\geq r_{2}\geq \cdots \geq r_{k}$ and we define the
\emph{first order sparse index set}
%
\begin{equation}
\label{eq:first-sparse-index-set} \mathcal{I}_{p,k}^{\mathbh{r}} = \{ \alpha \in
\mathcal{I}_{p,k} \mid\alpha _{i}\leq
r_{i}\,\forall i\leq k \}.
\end{equation}
\end{definition}

\begin{ex}
Let $k=5$ and $p=3$. Then a possible choice of the sparse index is
$\mathbh{r}=(3,2,2,1,1)$. For constructing the first order polynomials
all five first order basis polynomials can be used. The second order
polynomials are comprised by all possible combinations of first order
basis polynomials depending on $e_{1},\ldots ,e_{5}$ and the second
order basis polynomials of $e_{1},e_{2},e_{3}$. Analogously, the third
order polynomials are constructed. By using this first order sparse
index set $\mathcal{I}_{p,k}^{\mathbh{r}}$ the number of coefficient
functions $x_{\alpha }(\cdot )$ appearing within the propagator\index{propagator system} system
\eqref{propagator} can be reduced drastically without impairing the
solution much. The full index set $\mathcal{I}_{p,k}$ consists of
$\frac{(k+p)!}{k!p!}=56$ terms, whereas this first order sparse index
set includes $42$ terms.
\end{ex}

An even more rigorous reduction of the number of coefficients included
in the propagator\index{propagator system} system can be achieved by using a \emph{second order
sparse index}, i.e., a series of first order sparse indices
$  (\mathbh{r}^{j}  )_{j=0,\ldots ,p}$ that depend on the actual
order of the polynomials $\varPsi ^{\alpha }$ with $j=\vert \alpha \vert
$, \cite{L06}. This approach allows to exclude crossing products of
random variables $W(e_{i})$ from the construction of higher order basis
polynomials $\varPsi ^{\alpha }$ that add only negligible information to the
system.

\begin{definition}[Sparse truncation\index{sparse truncation} of second order]
\label{dfn:second-sparse-truncation}
Let $p$ be the maximum order of the index
$\alpha \in \mathcal{I}_{p,k}$. Then the
\emph{second order sparse index} $(\mathbh{r}) =   (\mathbh{r}^{j}
  )_{j\leq p}$ is a series of first order sparse indices
$\mathbh{r}^{j}=  (r_{1}^{j},\ldots ,r_{k}^{j}  )$ satisfying
$j=r_{1}^{j}\geq r_{2}^{j}\geq \cdots \geq r_{k}^{j}$ for all
$j=|\alpha |\leq p$ and we define the \emph{second order sparse index
set}
%
\begin{equation}
\label{eq:second sparse-index-set} \mathcal{I}_{p,k}^{(\mathbh{r})} = \bigl\{ \alpha
\in \mathcal{I} _{p,k} \mid\alpha _{i}\leq
r_{i}^{j}\,\forall i\leq k,\,\forall j\leq p \bigr\}.
\end{equation}
\end{definition}

\begin{ex}
Considering again the setting of the previous example with $k=5$ and
$p=3$, one possible choice of a second order sparse index is given via
$\mathbh{r}^{1}=(1,1,1,1,1)$, $\mathbh{r}^{2}=(2,2,2,1,0)$, and
$\mathbh{r}^{3}=(3,2,0,0,0)$. In constructing basis polynomials of order
$\vert \alpha \vert =3$ we can use all combinations of basis polynomials
depending on the first two random variables $e_{1}$ and $e_{2}$ up to
orders $3$ and $2$, respectively. Thus, these are $\sqrt{6}H_{3}(e
_{1})$, $\sqrt{2}H_{2}(e_{1})H_{1}(e_{2})$, and $\sqrt{2}H_{1}(e
_{1})H_{2}(e_{2})$, compare \cite{L06}.
\end{ex}

Table~\ref{tab:indices} in the Appendix lists the first and second order
sparse indices that were used for the numerical study. The results in
Table~\ref{tab1} show that at the price of a slightly higher error, and
of course the loss of a guaranteed upper bound as in \eqref{estimate},
the computational times could be reduced by several orders of magnitude.

\begin{appendix}
\section*{Appendix}

\begin{table}[h!]
\caption{Computational times in seconds and error estimates for
trigonometric and Haar bases from Example~\ref{ex} and different values
of $p$ and $k$. The number of coefficients $n_{\text{coeff}}$ depends
also on the type of truncation, compare Table~\ref{tab:indices} for
details. error$_{t=1}$ is the absolute error of the expansion's variance
$| Var  [X_{t}  ] - Var  [X^{p,k}_{t}  ] |$ at $t=1$
while error$_{\text{max}}$ is the maximum value over $t \in [0,1]$}
\label{tab1}
\begin{tabular*}{\textwidth}{@{\extracolsep{\fill}}rrrr| rrr | rrr}
\hline
\multicolumn{4}{c|}{} &
\multicolumn{3}{c|}{Trigonometric basis} &
\multicolumn{3}{c}{Haar basis} \\
$k$ &
$p$ &
$n_{\text{coeff}}$ &
type &
time &
$\text{error}_{t=1}$ &
$\text{error}_{\text{max}}$ &
time &
$\text{error}_{t=1}$ &
$\text{error}_{\text{max}}$\\
\hline
2  & 1 & 3     & full        & 0.04      & 6.04 & 6.04 & 0.03      & 5.31 & 5.31 \\
4  & 1 & 5     & full        & 0.02      & 5.68 & 5.68 & 0.02      & 5.31 & 5.31 \\
8  & 1 & 9     & full        & 0.22      & 5.49 & 5.49 & 0.04      & 5.30 & 5.30 \\
16 & 1 & 17    & full        & 0.05      & 5.40 & 5.40 & 0.19      & 5.31 & 5.31 \\
32 & 1 & 33    & full        & 0.17      & 5.35 & 5.35 & 1.31      & 5.30 & 5.30 \\
64 & 1 & 65    & full        & 1.28      & 5.33 & 5.33 & 10.12     & 5.31 & 5.31 \\
2  & 2 & 6     & full        & 0.02      & 3.04 & 3.04 & 0.02      & 1.61 & 1.83 \\
4  & 2 & 15    & full        & 0.03      & 2.35 & 2.35 & 0.05      & 1.61 & 1.76 \\
8  & 2 & 45    & full        & 0.20      & 1.98 & 1.98 & 0.82      & 1.61 & 1.69 \\
8  & 2 & 41    & \sparse{1}  & 0.31      & 1.99 & 1.99 & 0.24      & 1.61 & 1.69 \\
8  & 2 & 19    & \sparse{2}  & 0.04      & 2.16 & 2.16 & 0.07      & 1.61 & 1.72 \\
16 & 2 & 153   & full        & 3.85      & 1.80 & 1.80 & 18.61     & 1.61 & 1.65 \\
16 & 2 & 141   & \sparse{3}  & 1.00      & 1.80 & 1.80 & 4.60      & 1.61 & 1.65 \\
16 & 2 & 27    & \sparse{4}  & 0.05      & 2.07 & 2.07 & 0.19      & 1.61 & 1.67 \\
32 & 2 & 561   & full        & 86.83     & 1.71 & 1.71 & 554.61    & 1.61 & 1.63 \\
32 & 2 & 537   & \sparse{5}  & 22.80     & 1.71 & 1.71 & 143.90    & 1.61 & 1.63 \\
32 & 2 & 69    & \sparse{6}  & 0.31      & 1.84 & 1.84 & 3.22      & 1.61 & 1.64 \\
64 & 2 & 2145  & full        & 2189.70   & 1.66 & 1.66 & 17234.51  & 1.61 & 1.62 \\
2  & 3 & 10    & full        & 0.02      & 2.15 & 2.15 & 0.02      & 0.38 & 1.35 \\
4  & 3 & 35    & full        & 0.10      & 1.29 & 1.29 & 0.19      & 0.38 & 1.01 \\
8  & 3 & 165   & full        & 2.76      & 0.84 & 0.84 & 10.58     & 0.38 & 0.76 \\
8  & 3 & 127   & \sparse{7}  & 0.53      & 0.85 & 0.85 & 1.77      & 0.37 & 0.76 \\
8  & 3 & 37    & \sparse{8}  & 0.06      & 1.11 & 1.11 & 0.13      & 0.38 & 0.86 \\
16 & 3 & 969   & full        & 200.34    & 0.61 & 0.61 & 1070.17   & 0.38 & 0.58 \\
16 & 3 & 763   & \sparse{9}  & 45.61     & 0.62 & 0.62 & 169.16    & 0.38 & 0.59 \\
16 & 3 & 45    & \sparse{10} & 0.14      & 1.02 & 1.02 & 0.39      & 0.38 & 0.76 \\
2  & 4 & 15    & full        & 0.02      & 1.94 & 1.94 & 0.02      & 0.07 & 1.32 \\
4  & 4 & 70    & full        & 0.30      & 1.04 & 1.04 & 0.79      & 0.07 & 0.87 \\
8  & 4 & 495   & full        & 26.99     & 0.57 & 0.57 & 117.55    & 0.07 & 0.56 \\
8  & 4 & 303   & \sparse{11} & 4.34      & 0.57 & 0.57 & 12.30     & 0.07 & 0.52 \\
8  & 4 & 32    & \sparse{12} & 0.05      & 0.96 & 0.96 & 0.12      & 0.07 & 0.75 \\
16 & 4 & 4845  & full        & 5986.44   & 0.32 & 0.32 & 30591.94  & 0.07 & 0.33 \\
16 & 4 & 40    & \sparse{13} & 0.08      & 0.87 & 0.87 & 0.36      & 0.07 & 0.67 \\
32 & 4 & 92    & \sparse{14} & 0.50      & 0.59 & 0.59 & 3.56      & 0.07 & 0.45 \\
2  & 5 & 21    & full        & 0.03      & 1.91 & 1.91 & 0.03      & 0.01 & 1.27 \\
4  & 5 & 126   & full        & 0.92      & 1.00 & 1.00 & 2.04      & 0.01 & 0.87 \\
8  & 5 & 1287  & full        & 192.98    & 0.51 & 0.51 & 855.07    & 0.01 & 0.53 \\
8  & 5 & 599   & \sparse{15} & 10.04     & 0.51 & 0.51 & 50.03     & 0.01 & 0.55 \\
8  & 5 & 36    & \sparse{16} & 0.03      & 0.92 & 0.92 & 0.11      & 0.01 & 0.74 \\
16 & 5 & 20349 & full        & 120469.96 & 0.26 & 0.26 & 600591.17 & 0.01 & 0.28 \\
16 & 5 & 44    & \sparse{17} & 0.06      & 0.83 & 0.83 & 0.28      & 0.01 & 0.65 \\
32 & 5 & 98    & \sparse{18} & 0.51      & 0.55 & 0.55 & 3.60      & 0.01 & 0.42 \\
\hline
\end{tabular*}
\end{table}

\begin{table}
\caption{List of first and second order sparse indices used in Section~\ref{secsparse}
together with the number of resulting coefficient functions. The reference numbers
coincide with those in Table~\ref{tab1}}\label{tab:indices}
\begin{tabular*}{\textwidth}{@{\extracolsep{\fill}}cccclc}
\hline
symbol &
$p$ &
$k$ &
order &
\multicolumn{1}{c}{index $\mathbh{r}$/$(\mathbh{r})$} &
$n_{\text{coeff}}$ \\
\hline\rule{0pt}{10pt}
sp\tsup{1}  & 2 & 8  & 1 & $\phantom{\mbox{}^{1}}\mathbh{r}=(2,2,2,2,1,1,1,1)$             & 41  \\[2pt]
sp\tsup{2}  & 2 & 8  & 2 & $\mathbh{r}^{1}=(1,\ldots ,1)$                             & 19  \\
            &   &    &   & $\mathbh{r}^{2}=(2,2,2,2,0,0,0,0)$                         &     \\[2pt]
sp\tsup{3}  & 2 & 16 & 1 & $\phantom{\mbox{}^{1}}\mathbh{r}=(2,2,2,2,1,\ldots ,1)$         & 141 \\[2pt]
sp\tsup{4}  & 2 & 16 & 2 & $\mathbh{r}^{1}=(1,\ldots ,1)$                             & 27  \\
            &   &    &   & $\mathbh{r}^{2}=(2,2,2,2,0,\ldots ,0)$                     &     \\[2pt]
sp\tsup{5}  & 2 & 32 & 1 & $\phantom{\mbox{}^{1}}\mathbh{r}=(2,2,2,2,2,2,2,2,1,\ldots ,1)$ & 537 \\[2pt]
sp\tsup{6}  & 2 & 32 & 2 & $\mathbh{r}^{1}=(1,\ldots ,1)$                             & 69  \\
            &   &    &   & $\mathbh{r}^{2}=(2,2,2,2,2,2,2,2,0,\ldots ,0)$             &     \\
\hline\rule{0pt}{10pt}
sp\tsup{7}  & 3 & 8  & 1 & $\phantom{\mbox{}^{1}}\mathbh{r}=(3,3,2,2,1,1,1,1)$             & 127 \\[2pt]
sp\tsup{8}  & 3 & 8  & 2 & $\mathbh{r}^{1}=(1,\ldots ,1)$                             & 37  \\[2pt]
            &   &    &   & $\mathbh{r}^{2}=(2,2,2,2,0,0,0,0)$                         &     \\
            &   &    &   & $\mathbh{r}^{3}=(3,3,2,2,0,0,0,0)$                         &     \\[2pt]
sp\tsup{9}  & 3 & 16 & 1 & $\phantom{\mbox{}^{1}}\mathbh{r}=(3,3,2,2,1,\ldots ,1)$         & 763 \\[2pt]
sp\tsup{10} & 3 & 16 & 2 & $\mathbh{r}^{1}=(1,\ldots ,1)$                             & 45  \\
            &   &    &   & $\mathbh{r}^{2}=(2,2,2,2,0,\ldots ,0)$                     &     \\
            &   &    &   & $\mathbh{r}^{3}=(3,3,2,2,0,\ldots ,0)$                     &     \\
\hline\rule{0pt}{10pt}
sp\tsup{11} & 4 & 8  & 1 & $\phantom{\mbox{}^{1}}\mathbh{r}=(4,4,2,2,1,1,1,1)$             & 303 \\[2pt]
sp\tsup{12} & 4 & 8  & 2 & $\mathbh{r}^{1}=(1,\ldots ,1)$                             & 32  \\
            &   &    &   & $\mathbh{r}^{2}=(2,2,2,2,0,\ldots ,0)$                     &     \\
            &   &    &   & $\mathbh{r}^{3}=(3,3,2,,0,\ldots ,0)$                      &     \\
            &   &    &   & $\mathbh{r}^{4}=(4,3,0,\ldots ,0)$                         &     \\[2pt]
sp\tsup{13} & 4 & 16 & 2 & $\mathbh{r}^{1}=(1,\ldots ,1)$                             & 40  \\
            &   &    &   & $\mathbh{r}^{2}=(2,2,2,2,0,\ldots ,0)$                     &     \\
            &   &    &   & $\mathbh{r}^{3}=(3,3,2,0,\ldots ,0)$                       &     \\
            &   &    &   & $\mathbh{r}^{4}=(4,3,0,\ldots ,0)$                         &     \\[2pt]
sp\tsup{14} & 4 & 32 & 2 & $\mathbh{r}^{1}=(1,\ldots ,1)$                             & 92  \\
            &   &    &   & $\mathbh{r}^{2}=(2,2,2,2,2,2,2,2,0,\ldots ,0)$             &     \\
            &   &    &   & $\mathbh{r}^{3}=(3,3,2,2,0,\ldots ,0)$                     &     \\
            &   &    &   & $\mathbh{r}^{4}=(4,4,0,\ldots ,0)$                         &     \\
\hline\rule{0pt}{10pt}
sp\tsup{15} & 5 & 8  & 1 & $\phantom{\mbox{}^{1}}\mathbh{r}=(5,5,2,2,1,1,1,1)$             & 599 \\[2pt]
sp\tsup{16} & 5 & 8  & 2 & $\mathbh{r}^{1}=(1,\ldots ,1)$                             & 36  \\
            &   &    &   & $\mathbh{r}^{2}=(2,2,2,2,0,\ldots ,0)$                     &     \\
            &   &    &   & $\mathbh{r}^{3}=(3,3,2,,0,\ldots ,0)$                      &     \\
            &   &    &   & $\mathbh{r}^{4}=(4,3,0,\ldots ,0)$                         &     \\
            &   &    &   & $\mathbh{r}^{5}=(5,3,0,\ldots ,0)$                         &     \\[2pt]
sp\tsup{17} & 5 & 16 & 2 & $\mathbh{r}^{1}=(1,\ldots ,1)$                             & 44  \\
            &   &    &   & $\mathbh{r}^{2}=(2,2,2,2,0,\ldots ,0)$                     &     \\
            &   &    &   & $\mathbh{r}^{3}=(3,3,2,0,\ldots ,0)$                       &     \\
            &   &    &   & $\mathbh{r}^{4}=(4,3,0,\ldots ,0)$                         &     \\
            &   &    &   & $\mathbh{r}^{5}=(5,3,0,\ldots ,0)$                         &     \\[2pt]
sp\tsup{18} & 5 & 32 & 2 & $\mathbh{r}^{1}=(1,\ldots ,1)$                             & 98  \\
            &   &    &   & $\mathbh{r}^{2}=(2,2,2,2,2,2,2,2,0,\ldots ,0)$             &     \\
            &   &    &   & $\mathbh{r}^{3}=(3,3,2,2,0,\ldots ,0)$                     &     \\
            &   &    &   & $\mathbh{r}^{4}=(4,4,0,\ldots ,0)$                         &     \\
            &   &    &   & $\mathbh{r}^{5}=(5,5,0,\ldots ,0)$                         &     \\
\hline
\end{tabular*}%
\end{table}
\end{appendix}


\begin{funding}
This project has received funding from the \gsponsor[id=GS1,sponsor-id=100010663]{European Research Council}
(ERC) under the European Union's Horizon 2020 research and innovation
programme (grant agreement No \gnumber[refid=GS1]{647573}), from \gsponsor[id=GS4,sponsor-id=501100001659]{Deutsche Forschungsgemeinschaft} (DFG, German Research Foundation) -- \gnumber[refid=GS4]{314838170},
GRK 2297 MathCoRe, from the project ``Ambit fields: probabilistic
properties\index{probabilistic properties} and statistical inference'' funded by \gsponsor[id=GS7,sponsor-id=100008398]{Villum Fonden}, and from
CREATES funded by the \gsponsor[id=GS8,sponsor-id=501100001732]{Danish National Research Foundation}.
\end{funding}

%

\end{document}